\pgfplotsset{%
    layers/standard/.define layer set={%
        background,axis background,axis grid,axis ticks,axis lines,axis tick labels,pre main,main,axis descriptions,axis foreground%
    }{
        grid style={/pgfplots/on layer=axis grid},%
        tick style={/pgfplots/on layer=axis ticks},%
        axis line style={/pgfplots/on layer=axis lines},%
        label style={/pgfplots/on layer=axis descriptions},%
        legend style={/pgfplots/on layer=axis descriptions},%
        title style={/pgfplots/on layer=axis descriptions},%
        colorbar style={/pgfplots/on layer=axis descriptions},%
        ticklabel style={/pgfplots/on layer=axis tick labels},%
        axis background@ style={/pgfplots/on layer=axis background},%
        3d box foreground style={/pgfplots/on layer=axis foreground},%
    },
}
\newtheorem{theorem}{Theorem}[section]
\newtheorem{lemma}{Lemma}[section]
\newtheorem{corollary}{Corollary}[section]
\newtheorem{remark}{Remark}[section]
\newtheorem{definition}{Definition}[section]
\newcommand{\reals}{\mathbb{R}}
\newcommand{\inner}[2]{\langle{#1},{#2}\rangle}
\newcommand{\argmin}{\operatorname{argmin}}
\newcommand{\dom}{\operatorname{dom}}
\newcommand{\dist}{\operatorname{dist}}
\newcommand{\D}{\operatorname{D}}
\newcommand{\T}{\operatorname{T}}
\newcommand{\I}{\operatorname{I}}
\newcommand{\cS}{\mathcal{S}}
\newcommand{\cB}{\mathcal{B}}
\newcommand{\cF}{\mathcal{F}}
\newcommand{\cU}{\mathcal{U}}
\newcommand{\cO}{\mathcal{O}}
\newcommand{\cJ}{\mathcal{J}}
\newcommand{\har}{\texttt{HAR}}
\newcommand{\harc}{\texttt{HAR-C}}
\newcommand{\hars}{\texttt{HAR-S}}
\renewcommand{\arc}{\texttt{ARC}}
\title{History-Aware Adaptive High-Order Tensor Regularization}
\author{
{Chang He} \thanks{The authors are listed alphabetically. School of Information Management and Engineering, Shanghai University of Finance and Economics; Department of Industrial and System Engineering, University of Minnesota. \texttt{ischanghe@gmail.com}}
\and
{Bo Jiang} \thanks{School of Information Management and Engineering, Shanghai University of Finance and Economics. \texttt{isyebojiang@gmail.com}}
\and
{Yuntian Jiang} \thanks{School of Information Management and Engineering, Shanghai University of Finance and Economics. \texttt{yuntianjiang07@gmail.com}}
\and
{Chuwen Zhang} \thanks{Booth School of Business, University of Chicago. \texttt{chuwen.zhang@chicagobooth.edu}}
\and
{Shuzhong Zhang} \thanks{Department of Industrial and System Engineering, University of Minnesota. \texttt{zhangs@umn.edu}}
}
\begin{document}
\maketitle

\begin{abstract}
    In this paper, we develop a new adaptive regularization method for minimizing a composite function, which is the sum of a $p$th-order ($p \ge 1$) Lipschitz continuous function and a simple, convex, and possibly nonsmooth function. We use a history of local Lipschitz estimates to adaptively select the current regularization parameter, an approach we shall term the {\it history-aware adaptive regularization method}. We explore how the selection of an appropriate volume of historical information affects both the theoretical and practical performance. By using all the historical information, our method matches the complexity guarantees of the standard $p$th-order tensor methods that require a known Lipschitz constant, for both convex and nonconvex objectives. In the nonconvex case, the number of iterations required to find an $(\epsilon_g,\epsilon_H)$-approximate second-order stationary point is bounded by $\cO(\max\{\epsilon_g^{-(p+1)/p}, \epsilon_H^{-(p+1)/(p-1)}\})$. For convex functions, we establish an $\cO(\epsilon^{-1/p})$ iteration complexity for finding an $\epsilon$-approximate optimal point and further propose an accelerated variant attaining an iteration complexity of $\cO(\epsilon^{-1/(p+1)})$. For practical consideration, we propose several variants of this method with only part of historical information. We introduce cyclic and sliding-window strategies for choosing historical Lipschitz estimates, which mitigate the limitation of overly conservative updates. As long as a rough upper bound of the Lipschitz constant is known, these two variants achieve the same iteration complexity guarantees in terms of the input accuracy as the method using full historical information. Finally, extensive numerical experiments are conducted to demonstrate the effectiveness of our adaptive approach.
\end{abstract}

\section{Introduction}
\subsection{Motivation}
Adaptively selecting parameters in optimization methods, such as the stepsize in first-order methods, is a central challenge in both theoretical analysis and practical implementation. Typically, there are three common strategies: line-search procedures, ratio test in trust-region methods, and adaptive regularization. The line-search procedure, which can be dated back to \citet{armijo1966minimization}, was designed for gradient methods. It introduces an inner loop to find a suitable stepsize at each iteration. Beyond first-order methods, 
trust-region methods \citep{conn2000trust} operate on a local quadratic approximation of the objective function within a ball constraint. By introducing a ratio test, the trust-region method adaptively adjusts the radius to guarantee global convergence. These two strategies are mostly designed for specific derivative information, namely, first-order and second-order derivative information. The line-search procedure relies on an explicit update with a closed form, which is generally available in first-order methods, while the trust-region method is based on a quadratic approximation. Unlike these two strategies, the third one 
that works for updates using arbitrary-order derivative information is called \textit{adaptive regularization}, which is the focus of this work.

The regularization technique was initially proposed for Newton's method \citep{griewank1981modification} and is well developed in \cite{conn2000trust}. \cite{nesterov2006cubic} theoretically established the first global complexity guarantees for the cubic regularization of Newton’s method, and \cite{weiser2007affine} then designed an affine-invariant version with encouraging numerical performance. Subsequently, this regularization was extended to a more general $p$th-order tensor update \citep{birgin2017worst,nesterov2021implementable}:
\begin{equation}\label{eq:pth update}
\begin{aligned}
      d^k &= \argmin_{d \in \reals^n} f(x^k) + \sum_{\ell=1}^p \frac{1}{\ell !} \D^\ell f(x^k)[d]^{\otimes \ell} + \frac{\sigma_k}{(p+1)!}\|d\|^{p+1}, \\
      x^{k+1} &= x^k + d^k, \ k = 0, 1, 2, \ldots
\end{aligned}
\end{equation}
where the search direction is generated by minimizing a regularized $p$th-order Taylor polynomial. To ensure the global convergence of the above scheme, one must carefully select the regularization parameter $\sigma_k$ at each iteration $k$, which should be close to the true $p$th-order Lipschitz constant. Therefore, the technique of adaptive regularization was developed to estimate the unknown Lipschitz constant automatically. Generally, there exists two main approaches. \cite{nesterov2006cubic} adopted the idea of the line-search procedure for first-order methods and proposed a line-search adaptive regularization\footnote{We clarify that this is distinct from the line-search procedure commonly used in first-order methods. We adopt the name ``line-search adaptive regularization" because the authors of the original work refer to their adaptive parameter selection scheme as a ``line-search strategy". (see Section 5.2 of \cite{nesterov2006cubic})} (\texttt{LS-AR}) strategy for the cubic regularization case ($p=2$). At each iteration, \texttt{LS-AR} uses an inner loop to repeatedly increase the value of $\sigma_k$ by a factor of 2 until a sufficient decrease condition is met. This strategy was later extended to general $p$th-order tensor updates and to functions with H\"{o}lder continuous derivatives. Another approach is
the adaptive regularization with cubics (\texttt{ARC}) developed by \citep{grapiglia2017regularized,grapiglia2020tensor,grapiglia2022tensor}.
Instead of an additional inner loop, \texttt{ARC} evaluates the search direction at each iteration using a ratio of the actual decrease in the objective function to the decrease predicted by the regularized Taylor polynomial. The regularization parameter $\sigma_k$ and iterate $x^k$ are then updated both based on this ratio. This framework was later generalized to adaptive regularization with $p$th-order models (\texttt{ARP}) \citep{birgin2017worst,cartis2018worst,cartis2022evaluation}. 

These two adaptive regularization schemes are ``Markovian", as their parameter selection depends only on information at the current iterate. \texttt{LS-AR} introduces an inner loop at the current iterate until a proper estimate is found, and the ratio test of \texttt{ARP} is also defined in terms of the current iterate. However, properly using the \textit{history information} can also be beneficial. One evident example is Nesterov's accelerated gradient method \citep{nesterov1983method}, which uses the previous iterate to gain acceleration. Besides, a recent trend in line-search-free first-order methods\footnote{A detailed review is presented in Section \ref{subsection:related work}.} \citep{malitsky2020adaptive,li2025simple} also demonstrates that the past information can effectively estimate the current parameter. These works naturally motivate us to explore the role of history information in designing optimization methods. Therefore, in this paper, we propose an approach termed the \textit{History-aware Adaptive Regularization} (\texttt{HAR}) method. This method leverages Lipschitz constant estimates from past iterates to inform the choice of the current regularization parameter. Our results reveal that while using all previous information is theoretically sound, using only part of the historical information by introducing a budget is often more practical. Our main contributions are presented below.

\subsection{Main contributions}
In this work, motivated by the auto-conditioned gradient descent method \citep{lan2024projected,li2025simple,yagishita2025simple}, we develop the history-aware adaptive regularization method (Algorithm \ref{alg:ac pth-order method}) in parallel with \texttt{LS-AR} and \texttt{ARP}. It is a unified adaptive regularization scheme that works for both convex and nonconvex objectives. Theoretically, we establish the iteration complexity of \texttt{HAR} for the composite objective in problem \eqref{eq:main}. When the differentiable part is convex, \texttt{HAR} finds an $\epsilon$-approximate optimal point (see Definition \ref{def:epsilon optimal point}) in $\cO(\epsilon^{-1/p})$ iterations. Furthermore, an accelerated version, \texttt{HAR-A} (Algorithm \ref{alg:acc HA-AR}), is also proposed for this case, achieving an improved $\cO(\epsilon^{-1/(p+1)})$ iteration complexity. For a nonconvex differentiable part, \texttt{HAR} finds an $(\epsilon_g,\epsilon_H)$-approximate second-order stationary point (see Definition \ref{def:epsilon stationary point}) in at most $\cO(\max\{\epsilon_g^{-(p+1)/p}, \epsilon_H^{-(p+1)/(p-1)}\})$ iterations. These results match those of standard $p$th-order tensor methods when the true Lipschitz constant is given.

On the practical side, we propose two variants: \texttt{HAR-C} (Algorithm \ref{alg:cyclic ac pth-order method}) and \texttt{HAR-S} (Algorithm \ref{alg:slide ac pth-order method}). The original \texttt{HAR} utilizes the local estimates from all the past iterates, which is sometimes not efficient in practice. Therefore, these two variants introduce a predefined budget to limit the number of historical local estimates. This strategy remedies potential overly conservative updates in the practical application of \texttt{HAR}. Under the condition that a rough upper bound of the $p$th-order Lipschitz constant is known, we establish convergence guarantees for these variants that match the original \texttt{HAR} in terms of the input accuracy. To validate the adaptivity of our proposed methods, we test the two practical variants, \harc{} and \hars{}, on numerous applications, ranging from convex to nonconvex problems. With a properly selected budget, our numerical performance shows that both methods, especially \hars{}, are highly comparable to the state-of-the-art implementation of \arc{} \citep{dussaultScalableAdaptiveCubic2024}.

\subsection{Related works}\label{subsection:related work}
In this subsection, we briefly review existing works related to our approach. The first part covers general $p$th-order updates (or high-order methods), which is the setting considered in this work. The second part discusses recently popular line-search-free first-order methods, which motivated us to explore how the selection of an appropriate volume of historical information affects both the theoretical and practical performance of our approach.

\paragraph{High-order optimization methods.} Beyond popular first-order methods \citep{beck2017first}, high-order optimization methods have drawn significant attention over the decades. The seminal work of \cite{nesterov2006cubic} provided the first global convergence rate for a second-order method, which was followed by the development of the more practical version \texttt{ARC} \citep{cartis2011adaptive1,cartis2011adaptive2}. These pioneering works motivated a trend in the complexity analysis of various second-order methods, including trust-region type methods \citep{curtis2017trust,curtis2021trust,jiang2023beyond,jiangAcceleratingTrustregionMethods2025}, homogeneous second-order methods \citep{zhang2025homogeneous,he2025homogeneous}, gradient regularized Newton's methods \citep{mishchenko2023regularized,doikov2024gradient,doikov2024super}, and Newton-CG methods \citep{royer2020newton,yao2023inexact,he2025newton}. Extending the analysis to derivatives of order higher than two, \cite{birgin2017worst,cartis2018worst} proposed \texttt{ARP} and analyzed its iteration complexity for nonconvex optimization. However, a major challenge with this high-order approach was that it was unknown how to efficiently solve the subproblem. A breakthrough for the convex case came when \cite{nesterov2021implementable} proposed an implementable third-order tensor method. More recently, third-order subproblem solvers for nonconvex objectives have also been developed \citep{cartis2024efficient,zhu2025global,cartis2025second,cartis2025cubic,zhou2025tight}. In addition, numerous variants have been proposed for achieving optimal complexity in convex optimization \citep{monteiro2013accelerated,gasnikov2019near,kovalev2022first,carmon2022optimal}, for practical considerations \citep{jiang2020unified,huang2024inexact,xu2020newton}, or as stochastic extensions \citep{hanzely2020stochastic,chen2022accelerating,lucchi2023sub}.

\paragraph{Line-search-free first-order methods.} Line-search-free first-order methods have become an active area of research in recent years. These methods employ adaptive stepsize rules that estimate the Lipschitz constant using information from history iterates, avoiding the inner loops required by traditional line-search procedures. This trend was initiated by two main strategies. The first was proposed by \cite{malitsky2020adaptive} for convex objectives. The second is the auto-conditioned stepsize, introduced by \cite{li2025simple} with an accelerated version for convex optimization (which also works for H\"{o}lder smoothness). Both strategies have motivated numerous variants. The Malitsky-Mishchenko stepsize was extended to the proximal setting for convex problems \citep{malitsky2024adaptive,latafat2024adaptive,oikonomidis2024adaptive} and nonconvex proximal objectives \citep{ye2025simple}, as well as to stochastic optimization \citep{aujol2025stochastic}. Similarly, the auto-conditioned stepsize was also shown to work in the proximal and stochastic settings \citep{yagishita2025simple,lan2024projected}. Beyond these two main approaches, other adaptive strategies have also been developed. For example, \cite{zhou2025adabb} proposed a strategy based on the traditional BB stepsize \citep{barzilai1988two}; \cite{gao2024gradient} developed hypergradient stepsize choice inspired by online learning; \cite{suh2025adaptive} designed an adaptive accelerated gradient method for convex objectives with convergence guarantees on both the function value gap and the gradient norm.

\section{Preliminaries}\label{section.preliminary}
In this paper, we study the following composite optimization problem:
\begin{equation}\label{eq:main}
    \min_{x \in \dom(\psi)} \ F(x) = f(x) + \psi(x),
\end{equation}
where function $f(\cdot)$ is $p$th-order continuously differentiable, and $\psi: \reals^n \to \reals \cup \{\infty\}$ is a simple proper closed convex function with $\dom(\psi) \subseteq \reals^n$. We assume that this problem admits at least one solution, which we denote by $x^\star \in \dom(\psi)$. Since $\psi$ and $F$ are possibly nonsmooth, we use $\partial \psi(x)$ and $\partial F(x)$ to denote their respective subgradients at a point $x$. Given a point \(x \in \mathbb{R}^n\) and a nonempty subset \(\mathcal{S} \subseteq \mathbb{R}^n\), we denote by $\dist(x, \mathcal{S}) \coloneq \inf_{y \in \mathcal{S}} \|x - y\|$ the distance between them. Given a symmetric matrix $A \in \reals^{n \times n}$, we use $\lambda_{\min}(A)$ and $\lambda_{\max}(A)$ to denote its smallest and largest eigenvalues, respectively. The identity matrix is denoted by $\I_{n \times n}$. Now we introduce several approximate optimality measures for problem \eqref{eq:main}. 
\begin{definition}\label{def:epsilon optimal point}
    Given an accuracy $\epsilon > 0$, we say point $x$ is an $\epsilon$-approximate optimal point if it satisfies $F(x) - F(x^\star) \le \epsilon$.
\end{definition}

\begin{definition}\label{def:epsilon stationary point}
    Given accuracies $\epsilon_g, \epsilon_H > 0$, we say point $x$ is an $\epsilon_g$-approximate first-order stationary point if it satisfies $\dist(x, \partial F(x)) \le \epsilon_g$. Furthermore, when $\psi \equiv 0$, we say point $x$ is an $\epsilon_H$-approximate second-order stationary point if it additionally satisfies $\lambda_{\min}(\nabla^2 F(x)) \ge -\epsilon_H$. 
\end{definition}

For $p \ge 1$, the $p$th-order directional derivative of function $f$ at point $x$ along directions $h_i \in \reals^n$, $i=1, \dots, p$ is denoted by $\D^p f(x)[h_1, \dots, h_p]$. If all directions $h_1, \dots, h_p$ are the same, we apply a simpler notation $\D^p f(x)[h]^{\otimes p}$, $h \in \reals^n$. In particular, for any $x \in \reals^n$ and $h_1, h_2 \in \reals^n$, we have
\begin{align*}
    \D f(x)[h_1] = \inner{\nabla f(x)}{h_1}, \ \D^2f(x)[h_1, h_2] = \inner{\nabla^2 f(x)h_1}{h_2}.   
\end{align*}
Then the operator norm of $\D^p f(x)$ is defined by
\begin{align*}
    \|\D^p f(x)\| = \max_{\|h\| \le  1} \ \big|\D^p f(x)[h]^{\otimes p}\big|.
\end{align*}
The $p$th-order Taylor polynomial of function $f$ at point $x$ is defined as follows:
\begin{align*}
    \T_p(y;x) = f(x) + \sum_{\ell=1}^p \frac{1}{\ell !} \D^\ell f(x)[y-x]^{\otimes \ell}, \ \forall y \in \reals^n.
\end{align*}
We use $\Omega_p^\sigma(y;x)$ to denote the $p$th-order Taylor polynomial regularized by a $(p+1)$th-order term with parameter $\sigma$:
\begin{align*}
   \Omega_p^\sigma(y;x) = \T_p(y;x) + \frac{\sigma}{(p+1)!}\|y - x\|^{p+1}, \ \forall y \in \reals^n.
\end{align*}
We say the $p$th-order direction derivative of function $f$ is $L_p$-Lipschitz continuous if it satisfies
\begin{equation}\label{eq:Lipchitz continuous}
    \|\D^p f(x) - \D^p f(y)\| \le L_p \|x - y\|, \ \forall x, y \in \reals^n.
\end{equation}
Under the above Lipschitz continuous property, several Lipschitz-type inequalities hold. For any $p \ge 1$, the function value satisfies
\begin{equation}\label{eq:taylor function value inequality}
    \left|f(y) - \T_p(y;x)\right| \le \frac{L_p}{(p+1)!}\|y - x\|^{p+1}, \ \forall x, y \in \reals^n.
\end{equation}
If $p \ge 2$, the gradient satisfies
\begin{equation}\label{eq:taylor gradient inequality}
    \|\nabla f(y) - \nabla \T_p(y;x)\| \le \frac{L_p}{p!}\|y - x\|^{p},
\end{equation}
and the Hessian matrix satisfies
\begin{equation}\label{eq:taylor Hessian inequality}
    \|\nabla^2 f(y) - \nabla^2 \T_p(y;x)\| \le \frac{L_p}{(p-1)!}\|y - x\|^{p-1}.
\end{equation}
The proof of the above inequalities can be found in \cite{birgin2017worst,cartis2018worst}. Finally, we call a differentiable function $d(x)$ on $\mathbb{R}^n$ uniformly convex (see Section 4.2.2 in \cite{nesterov_lectures_2018}) of degree $p\geq 2$ with constant $q>0$ if
\begin{equation}
    \label{eq.uniformly convex of p}
    d(y)\geq d(x)+ \inner{\nabla d(x)}{y-x} +\frac{q}{p}\|y-x\|^p, \ \forall x,y\in\mathbb{R}^n.
\end{equation}

\section{Algorithm Design and Convergence Analysis}\label{section:HA-AR}

In this section, we first provide the algorithm design of \texttt{HAR}. Similar to standard $p$th-order tensor methods, \texttt{HAR} finds a search direction $d^k$ by minimizing a regularized $p$th-order Taylor polynomial at iteration $k$ (Line \ref{line:compute the direction} of Algorithm \ref{alg:ac pth-order method}). Then a local Lipschitz constant estimate $H_k$ is calculated, which uses the ratio of the Taylor remainder to the magnitude of the search direction:
\begin{align*}
    \frac{H_k}{(p+1)!} = \frac{f(x^{k - 1} + d^k) - \T_p(x^{k-1} + d^k; x^{k-1})}
        {\|d^k\|^{p+1}}.
\end{align*}
Motivated by the auto-conditioned stepsize strategy, we introduce an adaptive parameter $M_k$ as the maximum of all historical local Lipschitz estimates. The final regularization parameter is set as $\sigma_k = \alpha M_k$, where $\alpha > 1$ is a predefined parameter. Compared with the gradient descent method with auto-conditioned stepsize, we introduce a monotone step in \texttt{HAR} (Line \ref{line:argmin step} in Algorithm \ref{alg:ac pth-order method}). Since the adaptive parameter $M_k$ may not be a close approximation of the true Lipschitz constant at some iterations, the direction generated in this scenario could unexpectedly increase the objective value. The monotone step acts as a safeguard: if a sufficient decrease is not achieved, a null iteration is taken instead. This mechanism prevents any undesirable increases in the function value, which is necessary for the convergence analysis.

\begin{algorithm}[h]
  \caption{History-Aware Adaptive Regularization Method}
  \label{alg:ac pth-order method}
  \begin{algorithmic}[1]
    \item \textbf{Input:} Initial point $x^0 \in \dom(\psi)$, initial guess $H_0 = M_0 >0$, parameter $\alpha>1$
    \item \textbf{For} $k=1,2,\ldots$ \textbf{do}
    \item \quad Update the adaptive parameter $M_k = \max\{M_{k-1},H_{k-1}\}$; \label{line:update Mk}
    \item \quad Calculate the regularization parameter $\sigma_k = \alpha M_k$;
    \item \quad Compute the search direction \label{line:compute the direction}
    \begin{equation}\label{eq:sublroblem}
         d^k = \argmin_{d \in \dom(\psi)} \Omega_p^{\sigma_k}(x^{k-1} + d; x^{k-1}) + \psi(x^{k-1} + d);  
    \end{equation}
    \item \quad Update the intermediate iterate $x^{k - 0.5} = x^{k-1} + d^k$; \label{line:intermediate iterate}
    \item \quad Update the iterate $x^k = \argmin_{x\in\{x^{k-0.5}, x^{k-1}\}} F(x)$; \label{line:argmin step}
    \item \quad Update the local Lipschitz estimate \label{line:update local estimate}
    \begin{align*}
       \frac{H_k}{(p+1)!} = \frac{f(x^{k - 0.5}) - \T_p(x^{k-1} + d^k; x^{k-1})}
        {\|d^k\|^{p+1}}; 
    \end{align*} 
    \item \textbf{End For}
  \end{algorithmic}
\end{algorithm}

By the local Lipschitz estimate in Line \ref{line:update local estimate}, we see
\begin{equation}\label{eq:function value identity}
    f(x^{k - 0.5}) - \T_p(x^{k-1} + d^k; x^{k-1}) = \frac{H_k}{(p+1)!}\|d^k\|^{p+1}. 
\end{equation}
The optimality condition of the subproblem \eqref{eq:sublroblem} follows
\begin{equation}\label{eq:function value optimality condition}
    \Omega_p^{\sigma_k}(x^{k-1} + d^k; x^{k-1}) + \psi(x^{k-1} + d^k) \le \Omega_p^{\sigma_k}(x^{k-1}; x^{k-1}) + \psi(x^{k-1}) = f(x^{k-1}) + \psi(x^{k-1}).
\end{equation}
By combining the above two equations \eqref{eq:function value optimality condition}, the function value satisfies
\begin{align*}
    f(x^{k - 0.5}) \le f(x^{k-1}) + \psi(x^{k-1}) - \frac{\sigma_k - H_k}{(p+1)!}\|d^k\|^{p+1} - \psi(x^{k-1} + d^k),
\end{align*}
which can be written equivalently as
\begin{equation}\label{eq:F function value decrease}
    F(x^{k - 0.5}) - F(x^{k-1}) \le - \frac{\sigma_k - H_k}{(p+1)!}\|d^k\|^{p+1}.
\end{equation}
The above descent inequality naturally motivates dividing the sequence of iterates into two categories: successful and unsuccessful. We define these two index sets mathematically as follows
\begin{equation}\label{eq:successful and unsuccessful index sets}
    \cS \coloneq \{k \ge 1 \mid (\alpha + 1)M_k \ge 2H_k\}, \ \text{and} \ \cU \coloneq \mathbb{N} \backslash \cS.
\end{equation}
Clearly, for any $k \ge 1$, it holds that $k = |\cS \cap \{1, \ldots, k\}| + |\cU \cap \{1, \ldots, k\}|$. To refer to the elements of these sets, we let $\cS(j)$ denote the index of the $j$th successful iteration (and similarly for $\cU(j)$). An iteration is successful because the function value is guaranteed to decrease sufficiently:
\begin{align*}
    F(x^{k - 0.5}) - F(x^{k-1}) \le - \frac{(\alpha - 1)M_k}{2(p+1)!}\|d^k\|^{p+1}, \ \forall k \in \cS.
\end{align*}
Conversely, for an unsuccessful iteration $k \in \cU$, a sufficient decrease is not guaranteed, though a decrease may still occur. All null iterations induced by the monotone step are therefore contained within the set of unsuccessful iterations $\cU$. A technical result of our analysis is that the number of unsuccessful iterations is finite, which is motivated by \cite{yagishita2025simple}. The intuition is that \texttt{HAR} performs an implicit tuning during each unsuccessful iteration: the local estimate is implicitly increased, which eventually forces a successful one. This finiteness and the monotonicity together guarantee the convergence of \texttt{HAR}. The following lemma also shows that the choice of the parameter $\alpha$ involves a trade-off between reducing the number of unsuccessful iterations and avoiding overly conservative update in practice.
\begin{lemma}\label{lemma:bounded unsuccessful index set}
    Suppose that function $f$ satisfies \eqref{eq:Lipchitz continuous}. The cardinality of the unsuccessful index set $\cU$ in Algorithm \ref{alg:ac pth-order method} satisfies
    \begin{align*}
        |\cU| \le \overline{\cU} \coloneq \left\lceil\log _{\frac{\alpha + 1}{2}}\frac{H_{\max}}{H_0}\right\rceil,
    \end{align*}
    where $H_{\max} = \max \{H_0, L_p\}$.
\end{lemma}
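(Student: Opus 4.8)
The plan is to show that every unsuccessful iteration forces strictly geometric growth of the adaptive parameter $M_k$, while $M_k$ stays bounded above by $H_{\max}$; counting how many such multiplications can occur then yields the claimed bound.

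First I would record two elementary facts. On one hand, combining the identity \eqref{eq:function value identity} with the Taylor inequality \eqref{eq:taylor function value inequality} gives $H_k \le L_p$ for every $k \ge 1$; together with $H_0 = M_0$ and the update $M_k = \max\{M_{k-1}, H_{k-1}\}$ in Line \ref{line:update Mk}, a straightforward induction shows that $\{M_k\}$ is nondecreasing, strictly positive (since $M_0 > 0$), and bounded above by $\max\{M_0, L_p\} = H_{\max}$. On the other hand, from the definition \eqref{eq:successful and unsuccessful index sets}, $k \in \cU$ means $H_k > \tfrac{\alpha+1}{2} M_k$; since $\alpha > 1$ we have $\tfrac{\alpha+1}{2} > 1$, so in particular $H_k > M_k$, whence $M_{k+1} = \max\{M_k, H_k\} = H_k > \tfrac{\alpha+1}{2} M_k$. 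Thus each unsuccessful iteration inflates $M$ by a factor of at least $\tfrac{\alpha+1}{2}$.

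Next I would chain these growth steps. Enumerate $\cU = \{k_1 < k_2 < \cdots\}$ in increasing order. Starting from $M_{k_1} \ge M_1 = H_0$, applying the growth bound at $k_1$, then monotonicity of $\{M_k\}$ on the indices between $k_1+1$ and $k_2$, then the growth bound at $k_2$, and so on, an induction gives $M_{k_j + 1} \ge \bigl(\tfrac{\alpha+1}{2}\bigr)^{j} H_0$ for every $j$ for which $k_j$ exists. Combining with $M_{k_j+1} \le H_{\max}$ yields $\bigl(\tfrac{\alpha+1}{2}\bigr)^{j} \le H_{\max}/H_0$, i.e.\ $j \le \log_{\frac{\alpha+1}{2}}(H_{\max}/H_0)$; since this holds for every valid $j$, the set $\cU$ is finite with $|\cU| \le \bigl\lceil \log_{\frac{\alpha+1}{2}}(H_{\max}/H_0) \bigr\rceil$.

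The argument is essentially routine once these facts are in place; the only mild subtlety I would flag, rather than belabor, is that one must not presuppose $\cU$ is finite when speaking of ``the last unsuccessful iteration'' — but this is handled automatically above, since an infinite $\cU$ would force $M_{k_j+1}\to\infty$, contradicting $M_k \le H_{\max}$. I would also remark in passing that the Taylor remainder defining $H_k$ need not be positive, but any iteration with $H_k \le 0$ trivially satisfies $(\alpha+1)M_k \ge 0 \ge 2H_k$ and is therefore successful, so such iterations never enter the count.
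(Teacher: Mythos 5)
Your proof is correct and follows essentially the same route as the paper's: both hinge on the observations that each unsuccessful iteration inflates $M_k$ by a factor of at least $\tfrac{\alpha+1}{2}$ and that $M_k \le H_{\max}$ throughout, so the number of such iterations is logarithmically bounded. The only cosmetic difference is that you argue directly while the paper phrases it as a contradiction, and your closing remark about iterations with $H_k \le 0$ being automatically successful is a nice clarifying addition not spelled out in the paper.
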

\begin{proof}
   Note that for any unsuccessful iteration $\cU(j)$, we have $(\alpha + 1)M_{\cU(j)} < 2H_{\cU(j)}$, $j \ge 1$. Recall that $M_{\cU(j) + 1} = \max \{H_0, H_1, \ldots, H_{\cU(j)}\}$. It follows
   \begin{align*}
        M_{\cU(j) + 1} \ge H_{\cU(j)} > \frac{\alpha + 1}{2}M_{\cU(j)}, \ \forall j \ge 1.
   \end{align*}
  Now we prove the result by contradiction. Suppose the cardinality of the unsuccessful index set satisfies $|\cU| > \overline{\cU}$. On the one hand, for any iteration in the region $J \in [\overline{\cU}, |\cU|]$, the above inequality implies
  \begin{align*}
      M_{\cU(J) + 1} > \left(\frac{\alpha + 1}{2}\right)^J M_{\cU(1)} \ge \frac{H_{\max}}{H_0} M_{\cU(1)}.
  \end{align*}
  Since $\cU(1) \ge 1$ and the sequence $\{M_k\}$ is nondecreasing, we have $M_{\cU(1)} \ge M_1 = H_0$.  Substituting this into the inequality above yields $ M_{\cU(J) + 1} > H_{\max}$. On the other hand, from the definition of $H_k$ and inequality \eqref{eq:taylor function value inequality}, we have the following bound for all $k \ge 1$:
  \begin{align*}
    \frac{H_k}{(p+1)!} = \frac{f(x^{k - 0.5}) - \T_p(x^{k-1} + d^k; x^{k-1})}{\|d^k\|^{p+1}} \le \frac{L_p}{(p+1)!}.
  \end{align*}
  This implies that $H_k \le L_p$ for all $k \ge 1$. By the definition of $M_k$, we can then conclude that $M_k \le \max\{L_p, H_0\} = H_{\max}$, $k \ge 1$. Therefore, it holds that $M_{\cU(J) + 1} \le H_{\max}$ and $ M_{\cU(J) + 1} > H_{\max}$, which is a contradiction. The proof is completed.
\end{proof}

\paragraph{Complexity analysis for the convex case.} Now we establish the iteration complexity of \texttt{HAR} under the condition that the differentiable part $f$ in problem \eqref{eq:main} is convex. Our analysis begins with a global perspective on the intermediate iterate in Line \ref{line:intermediate iterate} of Algorithm \ref{alg:ac pth-order method}. First note that Equation \eqref{eq:function value identity} implies
\begin{align*}
    F(x^{k - 0.5}) = \ &f(x^{k - 0.5}) + \psi(x^{k - 0.5}) \\
    = \ &\T_p(x^{k-1} + d^k; x^{k-1}) + \psi(x^{k - 0.5}) + \frac{H_k}{(p+1)!}\|d^k\|^{p+1} \\
    = \ &\Omega^{\sigma_k}_p(x^{k-1} + d^k; x^{k-1}) + \psi(x^{k-1} + d^k) + \frac{H_k - \sigma_k}{(p+1)!}\|d^k\|^{p+1}.
\end{align*}
Since the search direction is generated from the subproblem \eqref{eq:sublroblem}, the above equation follows
\begin{align*}
    F(x^{k - 0.5}) + \frac{\sigma_k - H_k}{(p+1)!}\|d^k\|^{p+1} = \ &\min_{d \in \reals^n} \left\{\Omega^{\sigma_k}_p(x^{k-1} + d; x^{k-1}) + \psi(x^{k-1} + d)\right\} \\
    \le \ &\min_{d \in \reals^n} \left\{f(x^{k-1} + d) + \psi(x^{k-1} + d) + \frac{\sigma_k + L_p}{(p+1)!}\|d\|^{p+1}\right\},
\end{align*}
where we use inequality \eqref{eq:taylor function value inequality} in the second step. Recall that for any successful iteration $\cS(j)$, $(\alpha + 1)M_{\cS(j)} \ge 2H_{\cS(j)}$ for any $j \ge 1$. This implies $\alpha M_{\cS(j)} - H_{\cS(j)} \ge (\alpha - 1)M_{\cS(j)} / 2 > 0$. Consequently, we have the following inequality, which is the main tool for establishing the iteration complexity:
\begin{equation}\label{eq:global inequality successful}
   F(x^{\cS(j)}) = F(x^{{\cS(j)} - 0.5}) \le \min_{d \in \reals^n} \left\{ F(x^{\cS(j)-1} + d) + \frac{\sigma_{\cS(j)} + L_p}{(p+1)!}\|d\|^{p+1}\right\}, \ \forall j \ge 1. 
\end{equation}

\begin{theorem}\label{thm:HA-AR convex}
    Suppose the function $f$ is convex and satisfies \eqref{eq:Lipchitz continuous}. Furthermore, assume that the initial sublevel set $\cF(x^0) \coloneq \{x \in \dom(\psi) \mid F(x) \le F(x^0)\}$ is bounded, i.e., $R \coloneq \sup_{x,y \in \cF(x^0)} \|x - y\| < \infty$. Then the sequence $\{x^k\}_{k \ge 0}$ generated by Algorithm~\ref{alg:ac pth-order method} satisfies
    \begin{align*}
        F(x^k) - F(x^\star) \le \frac{2\alpha H_{\max}R^{p+1}}{p!}\left(\frac{p+1}{k - \overline{\cU}}\right)^p, \ \forall k \ge \overline{\cU}.
    \end{align*}
\end{theorem}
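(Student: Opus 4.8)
The plan is to combine three facts: (i) the monotone step in Line~\ref{line:argmin step} makes $\{F(x^k)\}_{k\ge 0}$ nonincreasing, so all iterates lie in $\cF(x^0)$, and since $F(x^\star)\le F(x^0)$ also $x^\star\in\cF(x^0)$, whence $\|x^{k-1}-x^\star\|\le R$ for every $k$; (ii) the ``global'' estimate \eqref{eq:global inequality successful}, which holds at every successful iteration; and (iii) the bound $|\cU|\le\overline{\cU}$ from Lemma~\ref{lemma:bounded unsuccessful index set}.

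The first real step is to turn \eqref{eq:global inequality successful} into a scalar recursion along the successful iterations. From the proof of Lemma~\ref{lemma:bounded unsuccessful index set}, $M_k\le H_{\max}$ for all $k\ge1$, so $\sigma_{\cS(j)}+L_p=\alpha M_{\cS(j)}+L_p\le(\alpha+1)H_{\max}\le 2\alpha H_{\max}$ (using $\alpha>1$). Restricting the minimization in \eqref{eq:global inequality successful} to the segment $d=t(x^\star-x^{\cS(j)-1})$ with $t\in[0,1]$, then invoking convexity of $F$ and $\|x^\star-x^{\cS(j)-1}\|\le R$, gives
\begin{equation*}
F(x^{\cS(j)})-F(x^\star)\ \le\ (1-t)\bigl(F(x^{\cS(j)-1})-F(x^\star)\bigr)+a\,t^{p+1},\qquad a:=\frac{2\alpha H_{\max}R^{p+1}}{(p+1)!}.
\end{equation*}
Because $\cS(j)-1\ge\cS(j-1)$ (with the convention $\cS(0):=0$), monotonicity of $\{F(x^k)\}$ allows replacing $F(x^{\cS(j)-1})-F(x^\star)$ by $\delta_{j-1}$, where $\delta_j:=F(x^{\cS(j)})-F(x^\star)$ for $j\ge1$ and $\delta_0:=F(x^0)-F(x^\star)$; since $1-t\ge0$ this yields $\delta_j\le(1-t)\delta_{j-1}+a\,t^{p+1}$ for all $t\in[0,1]$ and $j\ge1$.

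Next I would solve this recursion in the standard way. Minimizing the right-hand side over $t$ gives $t^\star=(\delta_{j-1}/(a(p+1)))^{1/p}$; when $t^\star\le 1$ this produces the contraction $\delta_j\le\delta_{j-1}-c\,\delta_{j-1}^{(p+1)/p}$ with $c=\tfrac{p}{(p+1)^{1+1/p}a^{1/p}}$, while the remaining regime $\delta_{j-1}>a(p+1)$ is disposed of by taking $t=1$ (which forces $\delta_j\le a$) and only helps. Applying the first-order convexity bound for $s\mapsto s^{-1/p}$ to the contraction gives $\delta_j^{-1/p}\ge\delta_{j-1}^{-1/p}+c/p$, so telescoping yields $\delta_j^{-1/p}\ge jc/p$, i.e.\ $\delta_j\le (p/c)^p/j^p=(p+1)^{p+1}a/j^p$; a short direct check confirms the large-gap regime does not change this bound. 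Since $(p+1)^{p+1}/(p+1)!=(p+1)^p/p!$, this reads $\delta_j\le\frac{2\alpha H_{\max}R^{p+1}}{p!}\bigl(\tfrac{p+1}{j}\bigr)^p$.

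Finally I would translate back to the original counter. For $k>\overline{\cU}$, Lemma~\ref{lemma:bounded unsuccessful index set} gives $|\cS\cap\{1,\dots,k\}|\ge k-\overline{\cU}$, hence $\cS(k-\overline{\cU})\le k$; monotonicity of $\{F(x^k)\}$ then yields $F(x^k)-F(x^\star)\le F(x^{\cS(k-\overline{\cU})})-F(x^\star)=\delta_{k-\overline{\cU}}$, and substituting $j=k-\overline{\cU}$ into $\delta_j\le\frac{2\alpha H_{\max}R^{p+1}}{p!}\bigl(\tfrac{p+1}{j}\bigr)^p$ finishes the proof (the case $k=\overline{\cU}$ is vacuous). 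I expect the main obstacle to be the bookkeeping of the second step: threading inequality \eqref{eq:global inequality successful}, valid only on the successful set $\cS$, through the monotone step so that all unsuccessful iterations and all gaps between consecutive successful indices get absorbed into one clean scalar recursion. The recursion analysis in the third step is routine once this is set up.
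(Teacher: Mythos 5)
Your proposal is correct and follows essentially the same path as the paper's proof: establish the key inequality on successful iterations, restrict the minimization to the segment from $x^{\cS(j)-1}$ to $x^\star$, combine with convexity and the bound $\sigma_{\cS(j)}+L_p\le 2\alpha H_{\max}$ to obtain a scalar recursion for $\delta_j=F(x^{\cS(j)})-F(x^\star)$, and translate back via $|\cS\cap\{1,\dots,k\}|\ge k-\overline{\cU}$ from Lemma~\ref{lemma:bounded unsuccessful index set}. The only cosmetic difference is in the bookkeeping: you apply monotonicity to replace $F(x^{\cS(j)-1})-F(x^\star)$ by $\delta_{j-1}$ before minimizing over $t\in[0,1]$ (valid since $1-t\ge 0$) and then solve the recursion self-containedly via convexity of $s\mapsto s^{-1/p}$, whereas the paper minimizes first, handles the passage from $F(x^{\cS(j)-1})$ to $F(x^{\cS(j-1)})$ through the auxiliary function $\vartheta(\tau)=\tau-C\tau^{(p+1)/p}$, and cites Nesterov's argument for the resulting recursion — the two routes are equivalent and give the same constant.
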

\begin{proof}
    First note that $\|x^k - x^\star\| \le R$, $\forall k \ge 0$ since Algorithm \ref{alg:ac pth-order method} is monotone.
    By applying Equation \eqref{eq:global inequality successful} and convexity of $F$, the function value at any successful iteration $\cS(j)$ satisfies
    \begin{equation}\label{eq:function value decrease F tau}
        \begin{aligned}
            F(x^{\cS(j)}) \le \ &\min_{\tau \in [0, 1]} \left\{ F(x^{\cS(j)-1} + \tau (x^\star - x^{\cS(j)-1})) + \frac{\tau^{p+1}(\sigma_{\cS(j)} + L_p)}{(p+1)!}\|x^\star - x^{\cS(j)-1}\|^{p+1} \right\}\\
        \le \ &\min_{\tau \in [0, 1]} \left\{ F(x^{\cS(j)-1}) - \tau \big(F(x^{\cS(j)-1}) -  F(x^\star)\big) + \frac{\tau^{p+1}(\sigma_{\cS(j)} + L_p)R^{p+1}}{(p+1)!}\right\} \\
        \le \ &\min_{\tau \in [0, 1]} \left\{ F(x^{\cS(j)-1}) - \tau \big(F(x^{\cS(j)-1}) -  F(x^\star)\big) + \frac{2\tau^{p+1}\alpha H_{\max} R^{p+1}}{(p+1)!} \right\}. 
        \end{aligned}
    \end{equation}
    The last step holds because these parameters satisfy $\sigma_{\cS(j)} = \alpha M_{\cS(j)} \le \alpha H_{\max}$, $L_p \le H_{\max}$ and $\alpha > 1$. Let
    \begin{align*}
        \varphi_j(\tau) \coloneq F(x^{\cS(j)-1}) - \tau \big(F(x^{\cS(j)-1}) -  F(x^\star)\big) + \frac{2\tau^{p+1}\alpha H_{\max} R^{p+1}}{(p+1)!}.
    \end{align*}
    The minimum of the univariate function $\varphi_j(\cdot)$ is achieved at
    \begin{align*}
        \tau^\star_{j} = \left(\frac{p!\big(F(x^{{\cS(j)} - 1}) - F(x^\star)\big)}{2\alpha H_{\max} R^{p+1}}\right)^{\frac{1}{p}}.  
    \end{align*}
   Choosing $\tau = 1$ in Equation \eqref{eq:function value decrease F tau} implies
    \begin{equation}\label{eq:upper bound of F at xS(j)}
        F(x^{\cS(j)}) \le F(x^\star) + \frac{2\alpha H_{\max} R^{p+1}}{(p+1)!}, \ \forall j \ge 1.
    \end{equation}
    By combining the above inequality with $F(x^{{\cS(j)} - 1}) \le F(x^{\cS(j-1)})$, we obtain
    \begin{align*}
        \tau^\star_j = \left(\frac{p!\big(F(x^{{\cS(j)} - 1}) - F(x^\star)\big)}{2\alpha H_{\max} R^{p+1}}\right)^{\frac{1}{p}} \le \left(\frac{p!\big(F(x^{\cS(j - 1)}) - F(x^\star)\big)}{2\alpha H_{\max} R^{p+1}}\right)^{\frac{1}{p}} \le \left(\frac{1}{p+1}\right)^{\frac{1}{p}} \le 1, \ \forall j \ge 2.
    \end{align*}
    Therefore, we are safe to substitute $\tau = \tau^\star_j$ into inequality \eqref{eq:function value decrease F tau}. For any $j \ge 2$, this yields the following inequality
    \begin{align*}
        F(x^{\cS(j)}) - F(x^\star) \le F(x^{\cS(j)-1}) - F(x^\star) - \frac{p}{p+1}\left(\frac{p!}{2\alpha H_{\max} R^{p+1}}\right)^{\frac{1}{p}}\big(F(x^{\cS(j)-1}) - F(x^\star)\big)^{\frac{p+1}{p}}.
    \end{align*}
    Let $\Delta_j \coloneq F(x^{\cS(j)}) - F(x^\star)$. Now we claim the following recursive inequality holds in terms of $\Delta_j$:
    \begin{equation}\label{eq:recursive w.r.t. Delta_j}
        \Delta_j \le \Delta_{j-1} - \frac{p}{p+1}\left(\frac{p!}{2\alpha H_{\max} R^{p+1}}\right)^{\frac{1}{p}}\Delta_{j-1}^{\frac{p+1}{p}}, \ \forall j \ge 2.
    \end{equation}
    This is sufficient to show the following inequality holds for any $j \ge 2$:
    \begin{equation}\label{eq:relaxed recursive pth inequality}
        \begin{aligned}
            &F(x^{\cS(j)-1}) - F(x^\star) - \frac{p}{p+1}\left(\frac{p!}{2\alpha H_{\max} R^{p+1}}\right)^{\frac{1}{p}}\big(F(x^{\cS(j)-1}) - F(x^\star)\big)^{\frac{p+1}{p}} \\
            \le \ &\Delta_{j-1} - \underbrace{\frac{p}{p+1}\left(\frac{p!}{2\alpha H_{\max} R^{p+1}}\right)^{\frac{1}{p}}}_{\coloneq C}\Delta_{j-1}^{\frac{p+1}{p}}. 
        \end{aligned}
    \end{equation}
    This can be proved by investigating the univariate function $\vartheta(\tau) \coloneq \tau - C \cdot \tau^{(p+1) / p}$. It is easy to verify that $\vartheta(\cdot)$ is nondecreasing in the region $[0, (p/((p+1)C))^p]$. Equation \eqref{eq:upper bound of F at xS(j)} guarantees that 
    \begin{align*}
        \Delta_{j-1} \le \frac{2\alpha H_{\max} R^{p+1}}{(p+1)!} \le \left(\frac{p}{(p+1)C}\right)^p, \ \forall j \ge 2.
    \end{align*}
    Since $F(x^{\cS(j) - 1})  - F(x^\star) \le F(x^{\cS(j-1)}) - F(x^\star)$, $\forall j \ge 2$, the univariate function $\vartheta(\cdot)$ satisfies
    \begin{align*}
        \vartheta(F(x^{\cS(j) - 1}) - F(x^\star)) \le \vartheta(F(x^{\cS(j-1)}) - F(x^\star)) = \vartheta(\Delta_{j-1}), \ \forall j \ge 2.
    \end{align*}
    This is exactly the inequality from Equation \eqref{eq:relaxed recursive pth inequality}. To this end, we derive the recursive inequality in terms of $\Delta_j$ from Equation \eqref{eq:recursive w.r.t. Delta_j}:
    \begin{align*}
        \Delta_{j - 1} - \Delta _j \ge C \cdot \Delta_{j-1}^{\frac{p+1}{p}}, \ \forall j \ge 2.
    \end{align*}
     By a similar argument to the proof of Theorem 2 in \cite{nesterov2021implementable}, we conclude
     \begin{align*}
        \Delta_j \le C^{-p} \left(\frac{1}{C \Delta_1^{1/p}} + \frac{j-1}{p}\right)^{-p} \le C^{-p} \left(\frac{(p+1)^{(p+1)/p}}{p} + \frac{j-1}{p}\right)^{-p}, \ \forall j \ge 1, 
     \end{align*}
     which implies
     \begin{equation}\label{eq:convergence of successful iterates}
         F(x^{\cS(j)}) - F(x^\star) \le \frac{2\alpha H_{\max}R^{p+1}}{p!}\left(\frac{p+1}{j}\right)^p, \ \forall j \ge 1. 
     \end{equation}
    Note that for any iteration count $k$, it holds that $k = |\cS \cap \{1, \ldots, k\}| +  |\cU \cap \{1, \ldots, k\}|$. Let $j(k) \coloneqq |\cS \cap \{1, \ldots, k\}|$. Then for any $k \ge \overline{\cU}$, we see
    \begin{align*}
        j(k) = k - |\cU \cap \{1, \ldots, k\}| \ge k - |\cU| \ge k - \overline{\cU}.
    \end{align*}
    Since the method is monotone, we have $F(x^k) \le F(x^{\cS(j(k))})$, which implies
    \begin{align*}
        F(x^k) - F(x^\star) \le \frac{2\alpha H_{\max}R^{p+1}}{p!}\left(\frac{p+1}{j(k)}\right)^p \le \frac{2\alpha H_{\max}R^{p+1}}{p!}\left(\frac{p+1}{k - \overline{\cU}}\right)^p, \ \forall k \ge \overline{\cU}.
    \end{align*}
    The proof is completed.
\end{proof}

\begin{corollary}\label{corollary:HA-AR convex}
     Suppose the function $f$ is convex and satisfies \eqref{eq:Lipchitz continuous}. Furthermore, assume that the initial sublevel set $\cF(x^0) \coloneq \{x \in \dom(\psi) \mid F(x) \le F(x^0)\}$ is bounded, i.e., $R \coloneq \sup_{x,y \in \cF(x^0)} \|x - y\| < \infty$. Let the sequence $\{x^k\}_{k \ge 0}$ be generated by Algorithm~\ref{alg:ac pth-order method}. Then for any accuracy $\epsilon > 0$, Algorithm \ref{alg:ac pth-order method} requires at most
    \begin{align*}
        K = \overline{\cU} + \mathcal{O}\left(\left(\frac{\alpha H_{\max} R^{p+1}}{\epsilon}\right)^{\frac{1}{p}}\right)
    \end{align*}
    iterations to return an iterate $x^K$ such that $F(x^K) - F(x^\star) \le \epsilon$.
\end{corollary}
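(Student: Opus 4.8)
The plan is to obtain the corollary as an immediate consequence of Theorem~\ref{thm:HA-AR convex} by inverting the convergence rate. First I would recall that for every $k \ge \overline{\cU}$ the theorem gives
\[
F(x^k) - F(x^\star) \le \frac{2\alpha H_{\max}R^{p+1}}{p!}\left(\frac{p+1}{k - \overline{\cU}}\right)^p .
\]
Consequently, a sufficient condition for $F(x^k) - F(x^\star) \le \epsilon$ is that the right-hand side be bounded by $\epsilon$, which rearranges to
\[
k - \overline{\cU} \ge (p+1)\left(\frac{2\alpha H_{\max}R^{p+1}}{p!\,\epsilon}\right)^{1/p}.
\]
Thus it suffices to take $K := \overline{\cU} + \left\lceil (p+1)\big(2\alpha H_{\max}R^{p+1}/(p!\,\epsilon)\big)^{1/p} \right\rceil$. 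Since this choice automatically satisfies $K \ge \overline{\cU}$, the theorem applies at $k = K$ and yields $F(x^K) - F(x^\star) \le \epsilon$.

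It then remains only to put the iteration count in the stated $\cO(\cdot)$ form: as $p \ge 1$ is a fixed constant, the prefactors $(p+1)$ and $(2/p!)^{1/p}$ are absolute constants, so the ceiling term is $\cO\big((\alpha H_{\max} R^{p+1}/\epsilon)^{1/p}\big)$, giving $K = \overline{\cU} + \cO\big((\alpha H_{\max} R^{p+1}/\epsilon)^{1/p}\big)$ as claimed.

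There is essentially no obstacle here; the proof is a two-line computation. The only points deserving a word of care are (i) verifying that the candidate $K$ lies in the regime $k \ge \overline{\cU}$ in which the bound of Theorem~\ref{thm:HA-AR convex} is valid — immediate from the additive $\overline{\cU}$ offset — and (ii) confirming that the suppressed $p$-dependence is a genuine constant and need not be tracked inside the $\cO$-notation, which is also clear. I would keep the write-up to these few lines.
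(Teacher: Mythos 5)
Your proof is correct and matches the paper's (implicit) reasoning: the paper states the corollary without proof, treating it as the immediate inversion of the rate bound in Theorem~\ref{thm:HA-AR convex}, which is precisely the two-line calculation you carry out.
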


\paragraph{Complexity analysis for the nonconvex case.} We now turn to the case where the differentiable part $f$ is nonconvex. Since the objective function $F$ in problem \eqref{eq:main} is nonsmooth, we measure the stationarity of a point $x \in \dom(\psi)$ by the distance $\dist(0, \partial F(x))$. Recalling the function value decrease from \eqref{eq:F function value decrease}, the analysis reduces to relating the magnitude of the search direction $d^k$ and to the stationarity measure $\dist(0, \partial F(x^{k - 0.5}))$.
\begin{lemma}\label{lemma:relate d with gradient Hessian}
   Suppose that function $f$ satisfies \eqref{eq:Lipchitz continuous}. For any successful iteration, it holds that
   \begin{align*}
       \dist(0, \partial F(x^{\cS(j)})) \le \frac{2\alpha H_{\max}}{p!}\|d^{\cS(j)}\|^p, \ \forall j \ge 1.
   \end{align*}
    Furthermore, if the nonsmooth term vanishes, i.e, $\psi \equiv 0$, it also holds that
    \begin{align*}
        -\lambda_{\min}\big(\nabla^2 F(x^{\cS(j)})\big) \le \frac{2\alpha H_{\max}}{(p-1)!}\|d^{\cS(j)}\|^{p-1}, \ \forall j \ge 1.
    \end{align*}
\end{lemma}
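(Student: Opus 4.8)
The plan is to read both estimates off the first- and second-order optimality conditions of the subproblem \eqref{eq:sublroblem}, and then to absorb the resulting coefficient $\sigma_k + L_p$ into $2\alpha H_{\max}$ using (the proof of) Lemma \ref{lemma:bounded unsuccessful index set}. Fix a successful index $k = \cS(j)$. Since $(\alpha+1)M_k \ge 2H_k$, we have $\sigma_k - H_k = \alpha M_k - H_k \ge \tfrac{\alpha-1}{2}M_k > 0$, so \eqref{eq:F function value decrease} gives $F(x^{k-0.5}) < F(x^{k-1})$ whenever $d^k \neq 0$; in that case the monotone step in Line \ref{line:argmin step} returns $x^k = x^{k-0.5}$, so it suffices to bound $\dist(0,\partial F(x^{k-0.5}))$ and $-\lambda_{\min}(\nabla^2 F(x^{k-0.5}))$. (If $d^{\cS(j)} = 0$ then $x^{k-0.5} = x^{k-1}$, and the first-order optimality of \eqref{eq:sublroblem} already places $0$ in $\partial F(x^{k-1})$, while $\nabla^2\T_p(x^{k-1};x^{k-1}) = \nabla^2 f(x^{k-1})$; both claimed inequalities then hold trivially.)

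For the first bound I would write the first-order optimality condition of \eqref{eq:sublroblem} in the form: there exists $s^k \in \partial\psi(x^{k-0.5})$ with
\[
\nabla\T_p(x^{k-0.5};x^{k-1}) + \frac{\sigma_k}{p!}\|d^k\|^{p-1}d^k + s^k = 0 .
\]
Since $\nabla f(x^{k-0.5}) + s^k \in \partial F(x^{k-0.5})$, the triangle inequality together with the gradient Lipschitz bound \eqref{eq:taylor gradient inequality} (which also holds at $p=1$ directly from \eqref{eq:Lipchitz continuous}) yields
\begin{align*}
\dist(0,\partial F(x^{k-0.5})) &\le \big\|\nabla f(x^{k-0.5}) - \nabla\T_p(x^{k-0.5};x^{k-1})\big\| + \frac{\sigma_k}{p!}\|d^k\|^{p} \\
&\le \frac{\sigma_k + L_p}{p!}\,\|d^k\|^{p}.
\end{align*}

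For the second bound we have $\psi \equiv 0$, so the subproblem is unconstrained and $p \ge 2$. I would invoke the second-order necessary condition $\nabla^2\T_p(x^{k-0.5};x^{k-1}) + \nabla^2_d\big[\tfrac{\sigma_k}{(p+1)!}\|d\|^{p+1}\big]\big|_{d=d^k}\succeq 0$. A direct computation gives $\nabla^2_d\big[\tfrac{\sigma_k}{(p+1)!}\|d\|^{p+1}\big]\big|_{d=d^k} = \tfrac{\sigma_k}{p!}\big(\|d^k\|^{p-1}\I_{n\times n} + (p-1)\|d^k\|^{p-3}d^k(d^k)^{\top}\big)$, which is positive semidefinite with largest eigenvalue (attained along $d^k$) equal to $\tfrac{\sigma_k}{(p-1)!}\|d^k\|^{p-1}$. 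Hence $\lambda_{\min}\big(\nabla^2\T_p(x^{k-0.5};x^{k-1})\big)\ge -\tfrac{\sigma_k}{(p-1)!}\|d^k\|^{p-1}$, and combining this with the Hessian Lipschitz bound \eqref{eq:taylor Hessian inequality} via a Weyl-type perturbation estimate gives $\lambda_{\min}\big(\nabla^2 F(x^{k-0.5})\big)\ge -\tfrac{\sigma_k+L_p}{(p-1)!}\|d^k\|^{p-1}$.

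Finally, in both bounds I replace $\sigma_k + L_p = \alpha M_{\cS(j)} + L_p$ by $2\alpha H_{\max}$: the proof of Lemma \ref{lemma:bounded unsuccessful index set} already shows $H_k \le L_p$, hence $M_k \le H_{\max}$ for every $k \ge 1$, and since $L_p \le H_{\max}$ and $\alpha > 1$ we get $\alpha M_{\cS(j)} + L_p \le (\alpha+1)H_{\max} \le 2\alpha H_{\max}$. Substituting $x^{\cS(j)} = x^{\cS(j)-0.5}$ then yields exactly the two stated inequalities for every $j \ge 1$. The only step that is not routine bookkeeping is the operator-norm computation for the Hessian of the $(p+1)$-th power regularizer (and the minor care needed at $d^k=0$); everything else follows from the optimality conditions of \eqref{eq:sublroblem} and the Lipschitz-type inequalities recorded in Section \ref{section.preliminary}.
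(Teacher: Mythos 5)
Your proposal is correct and follows essentially the same route as the paper: read off the first- and second-order optimality conditions of \eqref{eq:sublroblem}, use the Lipschitz inequalities \eqref{eq:taylor gradient inequality} and \eqref{eq:taylor Hessian inequality} to pass from the regularized Taylor model to $F$, and then bound $\sigma_{\cS(j)}+L_p$ by $2\alpha H_{\max}$ via $M_k\le H_{\max}$, $L_p\le H_{\max}$, $\alpha>1$. The only cosmetic differences are that you dispatch the degenerate $d^{\cS(j)}=0$ case explicitly (the paper implicitly ignores it) and you bound the regularizer's Hessian by its largest eigenvalue in one step, whereas the paper carries the rank-one and scalar-identity pieces separately before recombining them into the same $-(\sigma_{\cS(j)}+L_p)/(p-1)!\,\|d^{\cS(j)}\|^{p-1}$ lower bound.
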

\begin{proof}
    Let $j \ge 1$. By definition of successful iterations, the function value decreases at iteration $\cS(j)$ and hence $x^{\cS(j)} = x^{\cS(j) - 1} + d^{\cS(j)}$. By the first-order optimality of subproblem \eqref{eq:sublroblem}, we have
    \begin{align*}
        \nabla \Omega_p^{\sigma_{\cS(j)}}(x^{\cS(j)}; x^{\cS(j)-1}) + \psi^\prime(x^{\cS(j)}) = 0,
    \end{align*}
    where $\psi^\prime(x^{\cS(j)}) \in \partial \psi(x^{\cS(j)})$. Note that
    \begin{align*}
        &\nabla f(x^{\cS(j)}) + \psi^\prime(x^{\cS(j)}) \\
        = \ &\nabla f(x^{\cS(j)}) - \nabla \Omega_p^{\sigma_{\cS(j)}}(x^{\cS(j)}; x^{\cS(j)-1}) + \nabla \Omega_p^{\sigma_{\cS(j)}}(x^{\cS(j)}; x^{\cS(j)-1}) + \psi^\prime(x^{\cS(j)}) \\
        = \ &\nabla f(x^{\cS(j)}) - \nabla \T_p(x^{\cS(j)}; x^{\cS(j)-1}) - \frac{\sigma_{\cS(j)}}{p!}\|d^{\cS(j)}\|^{p-1}d^{\cS(j)}.
    \end{align*}
    Combining the above equation with inequality \eqref{eq:taylor gradient inequality}, we obtain
    \begin{align*}
        &\|\nabla f(x^{\cS(j)}) + \psi^\prime(x^{\cS(j)})\| \\
        \le \ &\|\nabla f(x^{\cS(j)}) - \nabla \T_p(x^{\cS(j)}; x^{\cS(j)-1})\| + \frac{\sigma_{\cS(j)}}{p!}\|d^{\cS(j)}\|^p \\
        \le \ &\frac{L_p + \sigma_{\cS(j)}}{p!}\|d^{\cS(j)}\|^p \\
        \le \ &\frac{2\alpha H_{\max}}{p!}\|d^{\cS(j)}\|^p.
    \end{align*}
    We use inequalities $\sigma_{\cS(j)} = \alpha M_{\cS(j)} \le \alpha H_{\max}$ and $L_p \le H_{\max} \le \alpha H_{\max}$ in the last step. As a result, by applying the relationship $\dist(0, x^{\cS(j)}) \le \|\nabla f(x^{\cS(j)}) + \psi^\prime(x^{\cS(j)})\|$, we have $\dist(0, \partial F(x^{\cS(j)})) \le 2\alpha L_p \|d^{\cS(j)}\|^p / (p!)$. Now we consider the case that $\psi \equiv 0$. First note that
    \begin{align*}
        &\nabla^2 \Omega_p^{\sigma_{\cS(j)}}(x^{\cS(j)}; x^{\cS(j)-1}) \\
        = \ &\nabla^2 \T_p(x^{\cS(j)}; x^{\cS(j)-1}) + \frac{\sigma_{\cS(j)}}{p!}\big((p-1)\|d^{\cS(j)}\|^{p-3}d^{\cS(j)}(d^{\cS(j)})^\top + \|d^{\cS(j)}\|^{p-1}\I_{n \times n}\big).
    \end{align*}
    By the second-order optimality of subproblem \eqref{eq:sublroblem}, we have
    \begin{align*}
        \nabla^2 \Omega_p^{\sigma_{\cS(j)}}(x^{\cS(j)}; x^{\cS(j)-1}) \succeq 0,
    \end{align*}
    which implies
    \begin{align*}
        \nabla^2 \T_p(x^{\cS(j)}; x^{\cS(j)-1}) \succeq -\frac{\sigma_{\cS(j)}}{p!}\big((p-1)\|d^{\cS(j)}\|^{p-3}d^{\cS(j)}(d^{\cS(j)})^\top + \|d^{\cS(j)}\|^{p-1}\I_{n \times n}\big).
    \end{align*}
    By inequality \eqref{eq:taylor Hessian inequality}, it follows
    \begin{align*}
       &\nabla^2 F(x^{\cS(j)}) \\
       \succeq \ &\nabla^2 \T_p(x^{\cS(j)}; x^{\cS(j)-1}) - \frac{L_p}{(p-1)!}\|d^{\cS(j)}\|^{p-1} \I_{n \times n} \\
       \succeq \ &-\frac{\sigma_{\cS(j)}}{p!}\big((p-1)\|d^{\cS(j)}\|^{p-3}d^{\cS(j)}(d^{\cS(j)})^\top + \|d^{\cS(j)}\|^{p-1}\I_{n \times n}\big) - \frac{L_p}{(p-1)!}\|d^{\cS(j)}\|^{p-1} \I_{n \times n} \\
       = \ &-\frac{(p-1)\sigma_{\cS(j)}}{p!}\|d^{\cS(j)}\|^{p-3}d^{\cS(j)}(d^{\cS(j)})^\top - \frac{\sigma_{\cS(j)} + p L_p}{p!}\|d^{\cS(j)}\|^{p-1} \I_{n \times n}.
    \end{align*}
    Therefore, we conclude
    \begin{align*}
        \lambda_{\min}\big(\nabla^2 F(x^{\cS(j)})\big) \ge \ &-\frac{(p-1)\sigma_{\cS(j)}}{p!}\|d^{\cS(j)}\|^{p-3} \lambda_{\max}\big(d^{\cS(j)}(d^{\cS(j)})^\top\big) - \frac{\sigma_{\cS(j)} + p L_p}{p!}\|d^{\cS(j)}\|^{p-1} \\
        = \ &-\frac{(p-1)\sigma_{\cS(j)}}{p!}\|d^{\cS(j)}\|^{p-1} - \frac{\sigma_{\cS(j)} + p L_p}{p!}\|d^{\cS(j)}\|^{p-1} \\
        = \ &-\frac{\sigma_{\cS(j)} + L_p}{(p-1)!}\|d^{\cS(j)}\|^{p-1}.
    \end{align*}
    The proof is completed by using inequalities $\sigma_{\cS(j)} = \alpha M_{\cS(j)} \le \alpha H_{\max}$ and $L_p \le H_{\max} \le \alpha H_{\max}$ again.
\end{proof}

\begin{theorem}\label{thm:HA-AR nonconvex}
   Suppose that function $f$ satisfies \eqref{eq:Lipchitz continuous}. Let the sequence $\{x^k\}_{k \ge 0}$ be generated by Algorithm~\ref{alg:ac pth-order method}. Then for any accuracies $\epsilon_g, \epsilon_H > 0$, the algorithm requires at most
   \begin{align*}
       K = \overline{\cU} + \cO\left(\frac{F(x^0) - F(x^\star)}{(\alpha - 1)H_0} \cdot \max\left\{\left(\frac{\alpha H_{\max}}{ \epsilon_g} \right)^{\frac{p+1}{p}}, \left(\frac{\alpha H_{\max}}{\epsilon_H} \right)^{\frac{p+1}{p-1}}\right\}\right)
   \end{align*}
   iterations to return an iterate $\Tilde{x}$ such that $\dist(0, \partial F(\Tilde{x})) \le \epsilon_g$. Furthermore, if the nonsmooth term vanishes, i.e, $\psi \equiv 0$, the sequence additionally satisfies $\lambda_{\min}(\nabla^2 F(\Tilde{x})) \ge -\epsilon_{H}$.
\end{theorem}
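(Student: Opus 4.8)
The plan is to prove the bound by contradiction. I would suppose that the algorithm has run for $K$ iterations and that \emph{none} of the successful iterates $x^{\cS(j)}$ with $\cS(j)\le K$ meets the required stationarity, and then derive an upper bound on $K$ by combining (i) the monotone per-iteration decrease, (ii) Lemma~\ref{lemma:relate d with gradient Hessian} to turn ``no progress'' into a lower bound on the step lengths $\|d^{\cS(j)}\|$, and (iii) Lemma~\ref{lemma:bounded unsuccessful index set} to control the number of unsuccessful iterations.

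First I would record that $M_1=\max\{M_0,H_0\}=H_0$ and $\{M_k\}$ is nondecreasing, hence $M_k\ge H_0$ for all $k\ge 1$; then \eqref{eq:F function value decrease} together with the definition of $\cS$ in \eqref{eq:successful and unsuccessful index sets} gives, for each successful iteration, $F(x^{\cS(j)-1})-F(x^{\cS(j)})\ge F(x^{\cS(j)-1})-F(x^{\cS(j)-0.5})\ge \tfrac{(\alpha-1)H_0}{2(p+1)!}\|d^{\cS(j)}\|^{p+1}$. Since every per-step gap $F(x^{k-1})-F(x^k)$ is nonnegative (the algorithm is monotone), summing this over the successful indices in $\{1,\dots,K\}$ is dominated by $\sum_{k=1}^K\big(F(x^{k-1})-F(x^k)\big)=F(x^0)-F(x^K)\le F(x^0)-F(x^\star)$, giving
\[
 \frac{(\alpha-1)H_0}{2(p+1)!}\sum_{j:\,\cS(j)\le K}\|d^{\cS(j)}\|^{p+1}\ \le\ F(x^0)-F(x^\star).
\]

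Next, under the contradiction hypothesis each successful $\cS(j)\le K$ satisfies $\dist(0,\partial F(x^{\cS(j)}))>\epsilon_g$, or (when $\psi\equiv 0$) $-\lambda_{\min}(\nabla^2F(x^{\cS(j)}))>\epsilon_H$; Lemma~\ref{lemma:relate d with gradient Hessian} then forces $\|d^{\cS(j)}\|>(p!\,\epsilon_g/(2\alpha H_{\max}))^{1/p}$ or $\|d^{\cS(j)}\|>((p-1)!\,\epsilon_H/(2\alpha H_{\max}))^{1/(p-1)}$ respectively, so in either case $\|d^{\cS(j)}\|^{p+1}>\delta$ with $\delta:=\min\{(p!\,\epsilon_g/(2\alpha H_{\max}))^{(p+1)/p},\ ((p-1)!\,\epsilon_H/(2\alpha H_{\max}))^{(p+1)/(p-1)}\}$ (only the first term is needed when $\psi\not\equiv 0$). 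Plugging this into the displayed inequality bounds $|\cS\cap\{1,\dots,K\}|$ by $2(p+1)!\,(F(x^0)-F(x^\star))/((\alpha-1)H_0\,\delta)$; since $|\cU\cap\{1,\dots,K\}|\le\overline{\cU}$ by Lemma~\ref{lemma:bounded unsuccessful index set} and $K=|\cS\cap\{1,\dots,K\}|+|\cU\cap\{1,\dots,K\}|$, we get $K\le\overline{\cU}+2(p+1)!\,(F(x^0)-F(x^\star))/((\alpha-1)H_0\,\delta)$. Substituting $\delta$ and absorbing the $p$-dependent factorials into $\cO(\cdot)$ reproduces the claimed bound, and as soon as $K$ exceeds it the hypothesis is contradicted, so some successful $\cS(j)\le K$ yields $\tilde x=x^{\cS(j)}$ with $\dist(0,\partial F(\tilde x))\le\epsilon_g$ and, when $\psi\equiv 0$, also $\lambda_{\min}(\nabla^2F(\tilde x))\ge-\epsilon_H$.

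I expect no single step to be genuinely hard; the main thing to be careful about is the bookkeeping with two accuracies — keeping the $\min$/$\max$ straight and matching the exponents $\tfrac{p+1}{p}$ and $\tfrac{p+1}{p-1}$ (the second-order part tacitly using $p\ge 2$) — together with justifying that summing the decrease over only the successful indices is legitimate, which is precisely what the monotone step (Line~\ref{line:argmin step}) in Algorithm~\ref{alg:ac pth-order method} guarantees.
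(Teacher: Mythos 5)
Your argument is correct and is essentially the paper's proof, recast in contrapositive form: the paper directly bounds $\min_j\|d^{\cS(j)}\|^{p+1}$ via the telescoping sum and then applies Lemma~\ref{lemma:relate d with gradient Hessian} to the minimizing index, whereas you assume no successful iterate meets the stationarity criteria, lower-bound every $\|d^{\cS(j)}\|^{p+1}$ by $\delta$, and derive the same cap on $K$. The decomposition into $\cS$/$\cU$, the use of monotonicity to telescope over all indices, the appeal to Lemmas~\ref{lemma:bounded unsuccessful index set} and~\ref{lemma:relate d with gradient Hessian}, and the final $\min/\max$ bookkeeping over $\epsilon_g,\epsilon_H$ all match the paper's argument.
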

\begin{proof}
    Recall the definition of the successful index set in \eqref{eq:successful and unsuccessful index sets}. Let $j \ge 1$. We have
    \begin{align*}
        F(x^{\cS(j)}) - F(x^{\cS(j)-1}) \le - \frac{(\alpha -1)M_{\cS(j)}}{2(p+1)!}\|d^{\cS(j)}\|^{p+1}.
    \end{align*}
    Summing over $j$ from $1$ to $J$ implies
    \begin{align*}
        \sum_{j=1}^J \frac{(\alpha -1)M_{\cS(j)}}{2(p+1)!}\|d^{\cS(j)}\|^{p+1} \le \sum_{j=1}^J F(x^{\cS(j)-1}) - F(x^{\cS(j)}) \le \sum_{k=1}^{\cS(J)} F(x^{k-1}) - F(x^{k}).
    \end{align*}
    where we use the monotonicity of Algorithm \ref{alg:ac pth-order method}. Since $M_{\cS(j)} \ge H_0$, $\forall j \ge 1$, the above inequality follows
    \begin{align*}
        \sum_{j=1}^J \frac{(\alpha -1)H_0}{2(p+1)!}\|d^{\cS(j)}\|^{p+1} \le F(x^0) - F(x^\star), 
    \end{align*}
    which yields
    \begin{equation}\label{eq:d min nonconve inequality}
        \|d^{\cS(j(\min))}\|^{p+1} \coloneq \min_{j = 1, \ldots, J} \|d^{\cS(j)}\|^{p+1} \le \frac{2(p+1)!(F(x^0) - F(x^\star))}{(\alpha - 1)H_0 J}. 
    \end{equation}
    Clearly, $\cS(j(\min)) \in \{\cS(1), \ldots, \cS(J)\}$. Let $K = \cS(J)$. Note that $K = J + |\cU \cap \{1, \ldots, K\}| \le J + \overline{\cU}$. The above inequality follows
    \begin{align*}
        \|d^{\cS(j(\min))}\|^{p+1} \le \frac{2(p+1)!(F(x^0) - F(x^\star))}{(\alpha - 1)H_0 (K - \overline{\cU})}. 
    \end{align*}
    Now we use the results in Lemma \ref{lemma:relate d with gradient Hessian} to get
    \begin{align*}
       \dist(0, \partial F(x^{\cS(j(\min))})) \le \frac{2\alpha H_{\max}}{p!} \left( \frac{2(p+1)!(F(x^0) - F(x^\star))}{(\alpha - 1)H_0(K - \overline{\cU})} \right)^{\frac{p}{p+1}}.
    \end{align*}
    Therefore, after at most 
    \begin{align*}
        K = \overline{\cU} + \left\lceil \frac{2(p + 1)!(F(x^0) - F(x^\star))}{(\alpha - 1)H_0}\left(\frac{2\alpha H_{\max}}{p! \epsilon_g} \right)^{\frac{p+1}{p}}\right\rceil
    \end{align*}
    iterations, it holds that $\dist(0, \partial F(x^{\cS(j(\min))})) \le \epsilon_g$. In the special case where $\psi \equiv 0$ holds, from Lemma \ref{lemma:relate d with gradient Hessian}, we also have
    \begin{align*}
        -\lambda_{\min}\big(\nabla^2 F(x^{\cS(j(\min))})\big) \le \frac{2\alpha H_{\max}}{(p-1)!} \left( \frac{2(p+1)!(F(x^0) - F(x^\star))}{(\alpha - 1)H_0(K - \overline{\cU})} \right)^{\frac{p-1}{p+1}}.
    \end{align*}
    Similarly, after at most
     \begin{align*}
        K = \overline{\cU} + \left\lceil \frac{2(p + 1)!(F(x^0) - F(x^\star))}{(\alpha - 1)H_0}\left(\frac{2\alpha H_{\max}}{(p-1)! \epsilon_H} \right)^{\frac{p+1}{p-1}}\right\rceil
    \end{align*}
    iterations, it holds that $\lambda_{\min}\big(\nabla^2 F(x^{\cS(j(\min))})\big) \ge -\epsilon_H$. The proof is completed.
\end{proof}

\section{Practical Enhancements}\label{section:practical enhancements}
In Algorithm \ref{alg:ac pth-order method}, the adaptive parameter $M_k$ is updated using information from all past iterates. This strategy ensures that $M_k$ serves as a nondecreasing and close estimate of the true global Lipschitz constant $L_p$, which is key to guaranteeing convergence theoretically. However, this approach can be inefficient at times in practice. The global Lipschitz constant is often a conservative estimate, as the objective function may exhibit a much smaller local Lipschitz constant in certain regions. In such cases, relying on the global estimate prevents \texttt{HAR} from taking more aggressive steps. Besides, the maximum of all historical estimates rule is sensitive to outliers. If \texttt{HAR} encounters a single ill-conditioned point at some iteration $k$, the local Lipschitz estimate $H_k$ can become excessively large. Since the adaptive parameter $M_k$ never decreases, this single event can cause the regularization parameter to stagnate at a large value, making all subsequent steps conservative. 

A remedy is to introduce a predefined budget to limit the number of historical estimates the algorithm stores. Specifically, we maintain a history of at most $\cB$ local estimates, allowing \texttt{HAR} to periodically forget older information. This mechanism prevents a single large estimate from an ill-conditioned iterate and enables the scheme to adapt to regions where the local Lipschitz constant may be smaller. This motivates our cyclic history-aware adaptive regularization (\texttt{HAR-C}) method. In this scheme, we maintain a history of local estimates up to a budget of size $\cB$. Once the number of stored local estimates reaches this budget, we perform a refresh by clearing the history. We reset the adaptive parameter $M_k$ to the maximum of its initial guess and the current local estimate, thereby discarding outdated information as the algorithm progresses toward the optimum.
\begin{algorithm}[h]
  \caption{Cyclic History-Aware Adaptive Regularization Method}
  \label{alg:cyclic ac pth-order method}
  \begin{algorithmic}[1]
    \item \textbf{Input:} Initial point $x^0 \in \dom(\psi)$, initial guess $H_0 = M_0 >0$, parameter $\alpha>1$, budget parameter $\cB \in \mathbb{N} \cup \{\infty\}$
    \item \textbf{For} $k=1,2,\ldots$ \textbf{do}
    \item \quad \textbf{If} $k \bmod \cB = 0$ \textbf{then}
    \item \quad \quad Refresh the adaptive parameter $M_k = \max\{H_0, H_{k-1}\}$;
    \item \quad \textbf{Else}
    \item \quad \quad Update the adaptive parameter $M_k = \max\{M_{k-1}, H_{k-1}\}$;
    \item \quad \textbf{End If}
    \item \quad Calculate the regularization parameter $\sigma_k = \alpha M_k$;
    \item \quad Compute the search direction 
    \begin{align*}
       d^k = \argmin_{d \in \dom(\psi)} \Omega_p^{\sigma_k}(x^{k-1} + d; x^{k-1}) + \psi(x^{k-1} + d);   
    \end{align*}
    \item \quad Update the intermediate iterate $x^{k - 0.5} = x^{k-1} + d^k$; 
    \item \quad Update the iterate $x^k = \argmin_{x\in\{x^{k-0.5}, x^{k-1}\}} F(x)$; 
    \item \quad Update the local Lipschitz estimate 
    \begin{align*}
       \frac{H_k}{(p+1)!} = \frac{f(x^{k - 0.5}) - \T_p(x^{k-1} + d^k; x^{k-1})}
        {\|d^k\|^{p+1}}; 
    \end{align*} 
    \item \textbf{End For}
  \end{algorithmic}
\end{algorithm}

From the analysis of \texttt{HAR} in Section \ref{section:HA-AR}, a key element is the finiteness of the unsuccessful index set. This property and the monotone step together ensure that function value decrease is established during the successful iterations, even though they may be interspersed with unsuccessful ones. Intuitively, to ensure convergence, the budget $\cB$ in \texttt{HAR-C} should be larger than $\overline{\cU}$. The trade-off that naturally arises is that \texttt{HAR-C} is no longer parameter-free, as this requirement necessitates a rough upper estimate of the global Lipschitz constant. Let $\overline{L_p}$ be an upper bound of the true Lipschitz constant $L_p$. If the budget $\cB$ satisfies the following condition:
\begin{equation}\label{eq:requirement of budget}
    \cB \ge \left\lceil\log _{\frac{\alpha + 1}{2}}\frac{\max\{\overline{L_p}, H_0\}}{H_0}\right\rceil + 1,
\end{equation}
then \texttt{HAR-C} is guaranteed to not get stuck. Recall that in the proofs of Theorems \ref{thm:HA-AR convex} and \ref{thm:HA-AR nonconvex}, we first establish the convergence behavior in terms of the successful iterates (see Equation \eqref{eq:convergence of successful iterates} and Lemma \ref{lemma:relate d with gradient Hessian}), and then translate this behavior to the entire sequence of iterates. Note that the convergence behavior of the successful iterates is not affected by the cyclic update introduced in \texttt{HAR-C}. Therefore, the key difference compared with \texttt{HAR} is the bound on the number of null iterations. In \texttt{HAR-C}, up to $\overline{\cU}$ unsuccessful iterations can occur in each cycle.
\begin{lemma}\label{lemma:bounded unsuccessful index set with cycle}
    Suppose that function $f$ satisfies \eqref{eq:Lipchitz continuous}. The budget satisfies condition \eqref{eq:requirement of budget}. The cardinality of the unsuccessful index set $\cU$ in Algorithm \ref{alg:cyclic ac pth-order method} satisfies
    \begin{align*}
        |\cU \cap \{1, \ldots, k\}| \le \left\lceil \frac{k}{\cB}\right\rceil \overline{\cU}, \ \forall k \ge 1.
    \end{align*}
\end{lemma}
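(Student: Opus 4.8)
The plan is to localize the counting argument of Lemma~\ref{lemma:bounded unsuccessful index set} to a single cycle and then sum over cycles. Since the refresh step of Algorithm~\ref{alg:cyclic ac pth-order method} is triggered precisely at the indices $k\in\{\cB,2\cB,3\cB,\ldots\}$, I would partition the index range $\{1,\ldots,k\}$ into the consecutive blocks $C_i=\{(i-1)\cB+1,\ldots,i\cB\}$, $i\ge 1$; at most $\lceil k/\cB\rceil$ of these blocks meet $\{1,\ldots,k\}$. It therefore suffices to prove that each block contains at most $\overline{\cU}$ unsuccessful iterations, and then add up the per-block estimates.

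For the per-block bound I would replay the proof of Lemma~\ref{lemma:bounded unsuccessful index set} on a single block, using three facts. First, within a block the adaptive parameter $M_k$ is nondecreasing, because the non-refresh update $M_k=\max\{M_{k-1},H_{k-1}\}$ is used at every interior step. Second, the refresh at the start of each fresh run resets $M$ to $\max\{H_0,H_{k-1}\}\ge H_0$ (and $M_1=H_0$ on the very first run), so $M$ always re-enters a run from a value at least $H_0$. Third, exactly as in Lemma~\ref{lemma:bounded unsuccessful index set}, the Lipschitz property \eqref{eq:Lipchitz continuous} gives $H_k\le L_p$ for all $k$, hence $M_k\le H_{\max}$ throughout the entire run of Algorithm~\ref{alg:cyclic ac pth-order method}. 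Now suppose some block held strictly more than $\overline{\cU}$ unsuccessful iterations. As in \eqref{eq:successful and unsuccessful index sets}, each unsuccessful iteration $\cU(j)$ in that block satisfies $M_{\cU(j)+1}\ge H_{\cU(j)}>\tfrac{\alpha+1}{2}M_{\cU(j)}$; combining this with the within-run monotonicity of $M$ and with $M\ge H_0$ at the run's start forces $M$ to exceed $(\tfrac{\alpha+1}{2})^{\overline{\cU}}H_0\ge H_{\max}$ after $\overline{\cU}$ unsuccessful steps, contradicting $M_k\le H_{\max}$. Hence each block contributes at most $\overline{\cU}$ unsuccessful iterations. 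The budget condition \eqref{eq:requirement of budget} enters here only to guarantee $\overline{\cU}\le\cB-1$ (via $L_p\le\overline{L_p}$), i.e.\ that each block is long enough for the localized argument to be nonvacuous and to leave room for at least one successful iteration per cycle, so that \harc{} cannot stall.

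Summing the per-block bound over the at most $\lceil k/\cB\rceil$ blocks intersecting $\{1,\ldots,k\}$ then gives $|\cU\cap\{1,\ldots,k\}|\le\lceil k/\cB\rceil\,\overline{\cU}$. The step I expect to require the most care is the bookkeeping at block boundaries: the refresh index $i\cB$ closes block $C_i$ but simultaneously opens a new run, so one must attribute each unsuccessful index to exactly one block and check that the ``fresh'' value seen at that index is genuinely $\ge H_0$; moreover the initial run $\{1,\ldots,\cB-1\}$ is one step shorter than later runs. Pinning the per-block count down to exactly $\overline{\cU}$ rather than $\overline{\cU}+1$ uses that the run feeding into a refresh index accounts for at most $\overline{\cU}-1$ unsuccessful iterations once its first index is excluded --- a sharpening of the bound in Lemma~\ref{lemma:bounded unsuccessful index set} that the same contradiction argument already yields --- but no genuinely new ingredient is needed beyond Lemma~\ref{lemma:bounded unsuccessful index set}.
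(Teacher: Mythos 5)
Your proof takes the same high-level route as the paper (bound the unsuccessful iterations per $\cB$-length block, then multiply by $\lceil k/\cB\rceil$), but you spell out a boundary subtlety that the paper's terse proof glosses over. The paper simply asserts that ``from Lemma~\ref{lemma:bounded unsuccessful index set}, at most $\overline{\cU}$ unsuccessful iterations can occur within each cycle,'' yet this is not an immediate restatement of that lemma: the cycle $\{(i-1)\cB+1,\ldots,i\cB\}$ is not aligned with the refresh-free stretches on which $M_k$ is monotone, because the refresh at $k=i\cB$ sits at the \emph{end} of cycle $i$ while opening a new monotone run. You correctly observe that the contradiction argument in Lemma~\ref{lemma:bounded unsuccessful index set} in fact delivers the sharper statement that any monotone run starting from $M\ge H_0$ contains at most $\overline{\cU}-1$ unsuccessful iterations (already $M_{\cU(\overline{\cU})+1}>(\tfrac{\alpha+1}{2})^{\overline{\cU}}H_0\ge H_{\max}$ is a contradiction, and the step $M_{\cU(j)+1}\ge H_{\cU(j)}$ survives the refresh rule since $\max\{H_0,H_{k-1}\}\ge H_{k-1}$), and that a cycle is a tail of one such run together with the single refresh index $i\cB$, giving at most $(\overline{\cU}-1)+1=\overline{\cU}$ per cycle. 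That sharper per-run count is precisely the missing ingredient needed to justify the paper's per-cycle bound, so your version is the more rigorous one. One small imprecision worth flagging: the statement ``within a block the adaptive parameter $M_k$ is nondecreasing'' is not literally true, since $M$ can drop at the block's final index $i\cB$; but your closing paragraph shows you are aware of this and apply monotonicity only on the run portion $\{(i-1)\cB+1,\ldots,i\cB-1\}$, so the argument goes through.
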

\begin{proof}
   Since the budget $\cB$ satisfies condition \eqref{eq:requirement of budget}, by Lemma \ref{lemma:bounded unsuccessful index set}, there exists at least one successful iteration in the cycle, and thus \texttt{HAR-C} does not get stuck. Let $N_{\operatorname{cyc}}(k) = \lceil k / \cB\rceil$ be the current cycle number at iteration $k$. From Lemma \ref{lemma:bounded unsuccessful index set}, we know that at most $\overline{\cU}$ unsuccessful iterations can occur within each cycle. Therefore, the total number of unsuccessful iterations up to iteration $k$ is upper bounded by $N_{\operatorname{cyc}}(k) \overline{\cU}$, which completes the proof.
\end{proof}

\begin{theorem}\label{thm:CHA-AR convex}
    Suppose the function $f$ is convex and satisfies \eqref{eq:Lipchitz continuous}. The budget satisfies condition \eqref{eq:requirement of budget}. Furthermore, assume that the initial sublevel set $\cF(x^0) \coloneq \{x \in \dom(\psi) \mid F(x) \le F(x^0)\}$ is bounded, i.e., $R \coloneq \sup_{x,y \in \cF(x^0)} \|x - y\| < \infty$. Then the sequence $\{x^k\}_{k \ge 0}$ generated by Algorithm~\ref{alg:cyclic ac pth-order method} satisfies
    \begin{align*}
        F(x^{k}) - F(x^\star) \le \frac{2\alpha H_{\max}R^{p+1}}{p!}\left(\frac{p+1}{k(1 - \overline{\cU} / \cB) - \overline{\cU}}\right)^p, \ \forall k \ge \frac{\overline{\cU}}{1 - \overline{\cU}/\cB}. 
    \end{align*}
\end{theorem}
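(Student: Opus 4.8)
The plan is to reuse the convex analysis of Theorem~\ref{thm:HA-AR convex} almost verbatim and to change only the bookkeeping that counts successful versus unsuccessful iterations. First I would observe that the chain of estimates leading to the successful-iteration bound~\eqref{eq:convergence of successful iterates} uses the refresh rule of Algorithm~\ref{alg:cyclic ac pth-order method} only through two facts: (i) the algorithm is monotone, since the monotone update step $x^k = \argmin_{x \in \{x^{k-0.5}, x^{k-1}\}} F(x)$ is retained, so $\|x^k - x^\star\| \le R$ and $F(x^{\cS(j)-1}) \le F(x^{\cS(j-1)})$; and (ii) on every successful iteration $\sigma_{\cS(j)} = \alpha M_{\cS(j)} \le \alpha H_{\max}$ together with $\alpha M_{\cS(j)} - H_{\cS(j)} \ge (\alpha-1)M_{\cS(j)}/2 > 0$. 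For (ii) the only new point to check is that $M_k \le H_{\max}$ still holds under the cyclic refresh: exactly as in the proof of Lemma~\ref{lemma:bounded unsuccessful index set}, inequality~\eqref{eq:taylor function value inequality} gives $H_k \le L_p \le H_{\max}$ for every $k \ge 1$; since a refresh step sets $M_k = \max\{H_0, H_{k-1}\}$ and a non-refresh step sets $M_k = \max\{M_{k-1}, H_{k-1}\}$, an easy induction yields $M_k \le \max\{H_0, L_p\} = H_{\max}$ throughout. Consequently~\eqref{eq:global inequality successful} holds for Algorithm~\ref{alg:cyclic ac pth-order method}, and the entire derivation of the recursion for $\Delta_j = F(x^{\cS(j)}) - F(x^\star)$ and the bound $F(x^{\cS(j)}) - F(x^\star) \le \frac{2\alpha H_{\max} R^{p+1}}{p!}\big(\frac{p+1}{j}\big)^p$, $\forall j \ge 1$, carries over without change.

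Next I would translate this to the whole sequence using the cyclic count of unsuccessful iterations. Fix $k$ and set $j(k) \coloneq |\cS \cap \{1,\ldots,k\}|$, so $j(k) = k - |\cU \cap \{1,\ldots,k\}|$. By Lemma~\ref{lemma:bounded unsuccessful index set with cycle}, $|\cU \cap \{1,\ldots,k\}| \le \lceil k/\cB\rceil\,\overline{\cU} \le (k/\cB + 1)\overline{\cU}$, whence
\[
  j(k) \ \ge\ k - \Big(\tfrac{k}{\cB} + 1\Big)\overline{\cU} \ =\ k\Big(1 - \tfrac{\overline{\cU}}{\cB}\Big) - \overline{\cU}.
\]
The budget condition~\eqref{eq:requirement of budget} forces $\overline{\cU} < \cB$: indeed $H_{\max} = \max\{H_0, L_p\} \le \max\{H_0, \overline{L_p}\}$, so $\overline{\cU} = \big\lceil\log_{(\alpha+1)/2}\tfrac{H_{\max}}{H_0}\big\rceil \le \big\lceil\log_{(\alpha+1)/2}\tfrac{\max\{\overline{L_p},H_0\}}{H_0}\big\rceil \le \cB - 1$. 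Hence $1 - \overline{\cU}/\cB > 0$, the right-hand side above is strictly positive whenever $k > \overline{\cU}/(1 - \overline{\cU}/\cB)$, and since $j(k)$ is an integer this gives $j(k) \ge 1$ on that range; moreover Lemma~\ref{lemma:bounded unsuccessful index set with cycle} guarantees at least one successful iteration per cycle, so $\cS$ is infinite and $\cS(j(k))$ is well defined.

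Finally, monotonicity of Algorithm~\ref{alg:cyclic ac pth-order method} gives $F(x^k) \le F(x^{\cS(j(k))})$, and combining with the successful-iteration bound and the lower estimate for $j(k)$ yields
\[
  F(x^k) - F(x^\star) \ \le\ \frac{2\alpha H_{\max} R^{p+1}}{p!}\Big(\frac{p+1}{j(k)}\Big)^p \ \le\ \frac{2\alpha H_{\max} R^{p+1}}{p!}\left(\frac{p+1}{k(1 - \overline{\cU}/\cB) - \overline{\cU}}\right)^p
\]
for every $k > \overline{\cU}/(1 - \overline{\cU}/\cB)$, while at $k = \overline{\cU}/(1 - \overline{\cU}/\cB)$ the claimed bound is $+\infty$ and hence trivially true, which covers the stated range $k \ge \overline{\cU}/(1 - \overline{\cU}/\cB)$. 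I expect the only real subtlety to be step one: re-verifying that the cyclic refresh does not disturb the successful-iteration recursion — concretely, that $M_k \le H_{\max}$ and monotonicity persist — together with confirming that~\eqref{eq:requirement of budget} indeed gives $\overline{\cU} < \cB$ so that the displayed rate is eventually finite; everything else is a transcription of the proof of Theorem~\ref{thm:HA-AR convex} with $k - \overline{\cU}$ replaced by $k(1-\overline{\cU}/\cB) - \overline{\cU}$.
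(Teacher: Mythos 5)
Your proposal is correct and follows essentially the same route as the paper: reuse the successful-iteration bound~\eqref{eq:convergence of successful iterates} from Theorem~\ref{thm:HA-AR convex}, bound $j(k)$ from below via Lemma~\ref{lemma:bounded unsuccessful index set with cycle} and $\lceil k/\cB\rceil \le k/\cB+1$, and close with monotonicity. The additional checks you supply (that $M_k \le H_{\max}$ survives the cyclic refresh, and that condition~\eqref{eq:requirement of budget} implies $\overline{\cU} < \cB$ so the bound is eventually finite) are correct and make explicit steps the paper leaves implicit, but they do not constitute a different argument.
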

\begin{proof}
    From Equation \eqref{eq:convergence of successful iterates}, we know the convergence behavior in terms of the successful iterates is
   \begin{align*}
        F(x^{\cS(j)}) - F(x^\star) \le \frac{2\alpha H_{\max}R^{p+1}}{p!}\left(\frac{p+1}{j}\right)^p, \ \forall j \ge 1. 
   \end{align*}
   Besides, note that for any iteration count $k$, it holds that $k = |\cS \cap \{1, \ldots, k\}| +  |\cU \cap \{1, \ldots, k\}|$. Let $j(k) \coloneqq |\cS \cap \{1, \ldots, k\}|$. Then from Lemma \ref{lemma:bounded unsuccessful index set with cycle}, we have
   \begin{align*}
     j(k) = k - |\cU \cap \{1, \ldots, k\}| \ge k - \left\lceil \frac{k}{\cB}\right\rceil \overline{\cU} \ge k\left(1 - \frac{\overline{\cU}}{\cB}\right) - \overline{\cU}.
   \end{align*}
   Note that $F(x^{k}) - F(x^\star) \le F(x^{\cS(j(k))}) - F(x^\star)$ due to the monotonicity of \texttt{HAR-C}. Therefore, we conclude
    \begin{align*}
         F(x^{k}) - F(x^\star) \le \frac{2\alpha H_{\max}R^{p+1}}{p!}\left(\frac{p+1}{j(k)}\right)^p \le \frac{2\alpha H_{\max}R^{p+1}}{p!}\left(\frac{p+1}{k(1 - \overline{\cU} / \cB) - \overline{\cU}}\right)^p, \ \forall k \ge \frac{\overline{\cU}}{1 - \overline{\cU}/\cB}. 
    \end{align*}
    The proof is completed.
\end{proof}
\begin{corollary}
    Suppose the function $f$ is convex and satisfies \eqref{eq:Lipchitz continuous}. The budget satisfies condition \eqref{eq:requirement of budget}. Furthermore, assume that the initial sublevel set $\cF(x^0) \coloneq \{x \in \dom(\psi) \mid F(x) \le F(x^0)\}$ is bounded, i.e., $R \coloneq \sup_{x,y \in \cF(x^0)} \|x - y\| < \infty$. Let the sequence $\{x^k\}_{k \ge 0}$ be generated by Algorithm~\ref{alg:ac pth-order method}. Then for any accuracy $\epsilon > 0$, Algorithm \ref{alg:cyclic ac pth-order method} requires at most
    \begin{align*}
        K = \frac{\overline{\cU}}{1 - \overline{\cU}/\cB} + \mathcal{O}\left(\frac{1}{1 - \overline{\cU}/\cB}\left(\frac{\alpha H_{\max} R^{p+1}}{\epsilon}\right)^{\frac{1}{p}}\right)
    \end{align*}
    iterations to return an iterate $x^K$ such that $F(x^K) - F(x^\star) \le \epsilon$.
\end{corollary}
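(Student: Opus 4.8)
The plan is to invert the explicit convergence rate of Theorem~\ref{thm:CHA-AR convex} directly, exactly as Corollary~\ref{corollary:HA-AR convex} was obtained from Theorem~\ref{thm:HA-AR convex}. First I would record a preliminary observation: condition~\eqref{eq:requirement of budget} forces $\cB \ge \overline{\cU} + 1$, hence $\overline{\cU}/\cB < 1$ and the quantity $1 - \overline{\cU}/\cB$ is strictly positive. This makes all the divisions below legitimate and, in particular, guarantees that the threshold $\overline{\cU}/(1-\overline{\cU}/\cB)$ appearing in Theorem~\ref{thm:CHA-AR convex} is finite, so the theorem is non-vacuous.

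Next, for any $k \ge \overline{\cU}/(1-\overline{\cU}/\cB)$, Theorem~\ref{thm:CHA-AR convex} gives
\[
    F(x^{k}) - F(x^\star) \le \frac{2\alpha H_{\max}R^{p+1}}{p!}\left(\frac{p+1}{k(1 - \overline{\cU} / \cB) - \overline{\cU}}\right)^p .
\]
I would impose that the right-hand side be at most $\epsilon$. Taking $p$th roots and rearranging, this is equivalent to
\[
    k(1 - \overline{\cU}/\cB) - \overline{\cU} \ge (p+1)\left(\frac{2\alpha H_{\max}R^{p+1}}{p!\,\epsilon}\right)^{1/p},
\]
that is,
\[
    k \ge \frac{\overline{\cU}}{1-\overline{\cU}/\cB} + \frac{p+1}{1-\overline{\cU}/\cB}\left(\frac{2\alpha H_{\max}R^{p+1}}{p!\,\epsilon}\right)^{1/p}.
\]
Taking $K$ to be the ceiling of the right-hand side yields $F(x^K) - F(x^\star) \le \epsilon$. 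Note that this $K$ automatically exceeds the threshold $\overline{\cU}/(1-\overline{\cU}/\cB)$ because the second summand is nonnegative, so Theorem~\ref{thm:CHA-AR convex} does apply at index $K$.

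Finally, I would absorb the $p$-dependent constants $(p+1)$ and $(2/p!)^{1/p}$ into the $\mathcal{O}(\cdot)$, which gives
\[
    K = \frac{\overline{\cU}}{1 - \overline{\cU}/\cB} + \mathcal{O}\left(\frac{1}{1 - \overline{\cU}/\cB}\left(\frac{\alpha H_{\max} R^{p+1}}{\epsilon}\right)^{1/p}\right),
\]
as claimed. There is no genuine obstacle here: the argument is a one-line algebraic inversion of an explicit rate. The only points requiring a little care are checking the positivity of $1-\overline{\cU}/\cB$ (supplied by~\eqref{eq:requirement of budget}) so that the expressions make sense, and verifying that the produced $K$ lands in the regime $k \ge \overline{\cU}/(1-\overline{\cU}/\cB)$ where Theorem~\ref{thm:CHA-AR convex} is valid.
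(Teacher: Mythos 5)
Your proposal is correct and is exactly the intended derivation: the paper states this corollary without proof because it is precisely the one-line inversion of the rate in Theorem~\ref{thm:CHA-AR convex} that you carry out. Your two side observations are also right — condition~\eqref{eq:requirement of budget} gives $\cB \ge \overline{\cU}+1$ (since $\overline{L_p} \ge L_p$ implies $\max\{\overline{L_p},H_0\} \ge H_{\max}$), so $1-\overline{\cU}/\cB>0$, and the resulting $K$ exceeds the threshold $\overline{\cU}/(1-\overline{\cU}/\cB)$ so the theorem applies at index $K$.
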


\begin{theorem}\label{thm:CHA-AR nonconvex}
   Suppose that function $f$ satisfies \eqref{eq:Lipchitz continuous}. The budget satisfies condition \eqref{eq:requirement of budget}. Let the sequence $\{x^k\}_{k \ge 0}$ be generated by Algorithm~\ref{alg:cyclic ac pth-order method}. Then for any accuracies $\epsilon_g, \epsilon_H > 0$, the algorithm requires at most
   \begin{align*}
       K = \frac{\overline{\cU}}{1 - \overline{\cU}/\cB} + \cO\left(\frac{F(x^0) - F(x^\star)}{(\alpha - 1)(1 - \overline{\cU}/\cB)H_0} \cdot \max\left\{\left(\frac{\alpha H_{\max}}{ \epsilon_g} \right)^{\frac{p+1}{p}}, \left(\frac{\alpha H_{\max}}{\epsilon_H} \right)^{\frac{p+1}{p-1}}\right\}\right)
   \end{align*}
   iterations to return an iterate $\Tilde{x}$ such that $\dist(0, \partial F(\Tilde{x})) \le \epsilon_g$. Furthermore, if the nonsmooth term vanishes, i.e, $\psi \equiv 0$, the sequence additionally satisfies $\lambda_{\min}(\nabla^2 F(\Tilde{x})) \ge -\epsilon_{H}$.
\end{theorem}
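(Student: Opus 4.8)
The plan is to mirror the proof of Theorem~\ref{thm:HA-AR nonconvex}, replacing the uniform bound $|\cU| \le \overline{\cU}$ with the per-cycle bound supplied by Lemma~\ref{lemma:bounded unsuccessful index set with cycle}. First I would observe that the cyclic refresh in Algorithm~\ref{alg:cyclic ac pth-order method} changes neither the descent inequality \eqref{eq:F function value decrease}, nor the characterization of successful iterations, nor the conclusion of Lemma~\ref{lemma:relate d with gradient Hessian}; all of these depend only on the local quantities $\sigma_k$, $H_k$, $M_k$ at the current iterate, and the bound $M_k \ge H_0$ still holds since every refresh sets $M_k = \max\{H_0, H_{k-1}\} \ge H_0$. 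Hence, exactly as in the derivation of \eqref{eq:d min nonconve inequality}, summing the successful-iteration decrease over $j = 1, \ldots, J$, telescoping via the monotonicity of $\{F(x^k)\}$, and using $M_{\cS(j)} \ge H_0$ yields $\min_{j \le J}\|d^{\cS(j)}\|^{p+1} \le \frac{2(p+1)!\,(F(x^0) - F(x^\star))}{(\alpha-1)H_0\, J}$.

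Next I would convert the count $J$ of successful iterations into a bound on the total iteration count $K = \cS(J)$. Whereas in Theorem~\ref{thm:HA-AR nonconvex} one simply used $K = J + |\cU \cap \{1,\ldots,K\}| \le J + \overline{\cU}$, here Lemma~\ref{lemma:bounded unsuccessful index set with cycle} gives $|\cU \cap \{1,\ldots,K\}| \le \lceil K/\cB\rceil\,\overline{\cU} \le (K/\cB + 1)\overline{\cU}$, so that $J \ge K(1 - \overline{\cU}/\cB) - \overline{\cU}$. Condition \eqref{eq:requirement of budget} guarantees $\overline{\cU} \le \cB - 1 < \cB$, since $H_{\max} = \max\{H_0, L_p\} \le \max\{H_0, \overline{L_p}\}$ makes $\overline{\cU} = \lceil\log_{(\alpha+1)/2} H_{\max}/H_0\rceil$ no larger than the budget bound minus one; thus the coefficient $1 - \overline{\cU}/\cB$ is a positive constant and the estimate is meaningful once $K > \overline{\cU}/(1-\overline{\cU}/\cB)$. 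Substituting this lower bound on $J$ into the min-norm inequality gives $\min_{j}\|d^{\cS(j)}\|^{p+1} \le \frac{2(p+1)!\,(F(x^0)-F(x^\star))}{(\alpha-1)H_0\,\bigl(K(1-\overline{\cU}/\cB) - \overline{\cU}\bigr)}$.

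Finally I would feed this into Lemma~\ref{lemma:relate d with gradient Hessian}: at the index $\cS(j(\min))$ attaining the minimum, $\dist(0,\partial F(x^{\cS(j(\min))})) \le \frac{2\alpha H_{\max}}{p!}\|d^{\cS(j(\min))}\|^{p}$ and, when $\psi \equiv 0$, $-\lambda_{\min}(\nabla^2 F(x^{\cS(j(\min))})) \le \frac{2\alpha H_{\max}}{(p-1)!}\|d^{\cS(j(\min))}\|^{p-1}$. Raising the min-norm inequality to the powers $p/(p+1)$ and $(p-1)/(p+1)$ respectively and solving for $K$ in each case produces the two thresholds; their maximum, together with the offset $\overline{\cU}/(1-\overline{\cU}/\cB)$ absorbing the regime restriction, gives the claimed bound, with the extra factor $1/(1-\overline{\cU}/\cB)$ relative to Theorem~\ref{thm:HA-AR nonconvex} arising precisely from inverting the expression $K(1-\overline{\cU}/\cB) - \overline{\cU}$. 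The only delicate point is the bookkeeping with the ceiling $\lceil K/\cB\rceil$ and checking that condition \eqref{eq:requirement of budget} keeps $1-\overline{\cU}/\cB$ bounded away from zero; the remainder of the argument is essentially a verbatim repetition of the proof of Theorem~\ref{thm:HA-AR nonconvex}.
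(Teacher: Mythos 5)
Your proposal is correct and follows the same path as the paper: it reuses \eqref{eq:d min nonconve inequality} for the successful iterates, replaces the bound $|\cU| \le \overline{\cU}$ with Lemma~\ref{lemma:bounded unsuccessful index set with cycle} to obtain $J \ge K(1-\overline{\cU}/\cB) - \overline{\cU}$, and then invokes Lemma~\ref{lemma:relate d with gradient Hessian}. Your additional check that condition~\eqref{eq:requirement of budget} implies $\overline{\cU} \le \cB - 1$, and hence $1 - \overline{\cU}/\cB > 0$, is a correct observation that the paper uses implicitly.
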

\begin{proof}
    Since the introduced cycle does not affect the convergence behavior of the successful iterates, Equation \eqref{eq:d min nonconve inequality} still holds. That is
    \begin{align*}
        \|d^{\cS(j(\min))}\|^{p+1} \coloneq \min_{j = 1, \ldots, J} \|d^{\cS(j)}\|^{p+1} \le \frac{2(p+1)!(F(x^0) - F(x^\star))}{(\alpha - 1)H_0 J}.  
    \end{align*}
    Clearly, $\cS(j(\min)) \in \{\cS(1), \ldots, \cS(J)\}$. Let $K = \cS(J)$. Again, we use Lemma \ref{lemma:bounded unsuccessful index set with cycle} to get
    \begin{align*}
        K = J + |\cU \cap \{1, \ldots, K\}| \le J + \left\lceil \frac{K}{\cB}\right\rceil \overline{\cU} \le J + \left(\frac{K}{\cB} + 1\right)\overline{\cU}.
    \end{align*}
    This implies $J \ge K\left(1 - \frac{\overline{\cU}}{\cB}\right) - \overline{\cU}$. Substituting this inequality back and using Lemma \ref{lemma:relate d with gradient Hessian} completes the proof.
\end{proof}

\begin{algorithm}[h]
  \caption{Sliding-Window History-Aware Adaptive Regularization Method}
  \label{alg:slide ac pth-order method}
  \begin{algorithmic}[1]
    \item \textbf{Input:} Initial point $x^0 \in \dom(\psi)$, initial guess $H_0 = M_0 >0$, parameter $\alpha>1$, budget parameter $\cB \in \mathbb{N} \cup \{\infty\}$
    \item \textbf{For} $k=1,2,\ldots$ \textbf{do}
    \item \quad Calculate $b_k = \max\{1, k - \cB\}$;
    \item \quad Update the adaptive parameter $M_k = \max\{H_0, H_{b_k}, H_{b_k + 1}, \ldots, H_{k-1}\}$;
    \item \quad Calculate the regularization parameter $\sigma_k = \alpha M_k$;
    \item \quad Compute the search direction 
    \begin{align*}
       d^k = \argmin_{d \in \dom(\psi)} \Omega_p^{\sigma_k}(x^{k-1} + d; x^{k-1}) + \psi(x^{k-1} + d);   
    \end{align*}
    \item \quad Update the intermediate iterate $x^{k - 0.5} = x^{k-1} + d^k$; 
    \item \quad Update the iterate $x^k = \argmin_{x\in\{x^{k-0.5}, x^{k-1}\}} F(x)$; 
    \item \quad Update the local Lipschitz estimate 
    \begin{align*}
       \frac{H_k}{(p+1)!} = \frac{f(x^{k - 0.5}) - \T_p(x^{k-1} + d^k; x^{k-1})}
        {\|d^k\|^{p+1}}; 
    \end{align*} 
    \item \textbf{End For}
  \end{algorithmic}
\end{algorithm}

As an alternative to the cyclic refresh strategy, we introduce the sliding-window history-aware adaptive regularization (\texttt{HAR-S}) method in Algorithm \ref{alg:slide ac pth-order method}. This approach maintains a sliding window, using only the $\cB$ most recent local estimates to update the adaptive parameter. Theoretically, \texttt{HAR-S} converges as long as the choice of $\cB$ satisfies the condition \eqref{eq:requirement of budget}. We first establish an upper bound on the cardinality of unsuccessful iterates in \texttt{HAR-S} that is identical to the one in Lemma \ref{lemma:bounded unsuccessful index set with cycle}.
\begin{lemma}\label{lemma:bounded unsuccessful index set with slide}
    Suppose that function $f$ satisfies \eqref{eq:Lipchitz continuous}. The budget satisfies condition \eqref{eq:requirement of budget}. The cardinality of the unsuccessful index set $\cU$ in Algorithm \ref{alg:slide ac pth-order method} satisfies
    \begin{align*}
        |\cU \cap \{1, \ldots, k\}| \le \left\lceil \frac{k}{\cB}\right\rceil \overline{\cU}, \ \forall k \ge 1.
    \end{align*}
\end{lemma}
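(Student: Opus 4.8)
The plan is to mimic the block-counting structure used for Lemma~\ref{lemma:bounded unsuccessful index set with cycle}: partition $\{1,\dots,k\}$ into $\lceil k/\cB\rceil$ consecutive blocks of length at most $\cB$ and show that each block contains at most $\overline{\cU}$ unsuccessful iterations; summing then gives the claim. The obstacle, and the reason the cyclic proof does not transfer verbatim, is that in Algorithm~\ref{alg:slide ac pth-order method} the adaptive sequence $\{M_k\}$ is \emph{not} monotone: once a large local estimate $H_j$ leaves the sliding window $\{H_{b_k},\dots,H_{k-1}\}$ it no longer contributes to $M_k$. Hence I cannot invoke Lemma~\ref{lemma:bounded unsuccessful index set} on a block as a black box; instead I will rebuild its geometric-growth argument, exploiting the fact that any estimate produced inside a block of length $\le\cB$ remains visible to every later iteration of the same block.

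Concretely, fix a block $W$ of at most $\cB$ consecutive indices and let $m_1<m_2<\dots<m_r$ be the unsuccessful iterations lying in $W$. For each unsuccessful index $m$, the definition \eqref{eq:successful and unsuccessful index sets} gives $(\alpha+1)M_m<2H_m$, i.e.\ $H_m>\tfrac{\alpha+1}{2}M_m$. Since $m_{i'}-m_i\le\cB-1<\cB$ for all $i<i'$, we get $b_{m_{i'}}=\max\{1,m_{i'}-\cB\}\le m_i\le m_{i'}-1$, so $H_{m_i}$ is one of the terms in the maximum defining $M_{m_{i'}}$; in particular $M_{m_{i+1}}\ge H_{m_i}>\tfrac{\alpha+1}{2}M_{m_i}$. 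Chaining these inequalities yields $M_{m_r}>(\tfrac{\alpha+1}{2})^{r-1}M_{m_1}\ge(\tfrac{\alpha+1}{2})^{r-1}H_0$, using $M_{m_1}\ge H_0$. On the other hand, exactly as in the proof of Lemma~\ref{lemma:bounded unsuccessful index set}, inequality \eqref{eq:taylor function value inequality} applied to the defining formula for $H_k$ forces $H_j\le L_p$ for every $j\ge1$, whence $M_{m_r}\le\max\{H_0,L_p\}=H_{\max}$. Combining the two bounds gives $(\tfrac{\alpha+1}{2})^{r-1}<H_{\max}/H_0$, so $r-1<\log_{(\alpha+1)/2}(H_{\max}/H_0)\le\overline{\cU}$, and since $r,\overline{\cU}\in\mathbb{N}$ this forces $r\le\overline{\cU}$.

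It then remains to sum: writing $\{1,\dots,k\}$ as the disjoint union of the $\lceil k/\cB\rceil$ blocks $\{(i-1)\cB+1,\dots,\min(i\cB,k)\}$ and applying the previous paragraph to each block gives $|\cU\cap\{1,\dots,k\}|\le\lceil k/\cB\rceil\,\overline{\cU}$. I will also remark that condition \eqref{eq:requirement of budget} entails $\cB>\overline{\cU}$ (because $\max\{\overline{L_p},H_0\}\ge H_{\max}$), so every window of $\cB$ consecutive iterations contains at least one successful iteration and Algorithm~\ref{alg:slide ac pth-order method} never stalls; this is what keeps the enumeration $\cS(1),\cS(2),\dots$ well defined and is needed for the downstream complexity bounds. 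The main obstacle is precisely the within-block geometric-growth step of the second paragraph: one must verify the visibility claim $b_{m_{i'}}\le m_i$ so that $M_{m_{i'}}\ge H_{m_i}$ despite the loss of monotonicity, after which everything reduces to a direct re-run of the argument behind Lemma~\ref{lemma:bounded unsuccessful index set}.
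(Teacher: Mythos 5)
Your proposal is correct and follows essentially the same approach as the paper's proof: partition $\{1,\dots,k\}$ into blocks of length at most $\cB$, use the sliding-window visibility ($b_{m_{i'}}\le m_i$ for two unsuccessful indices in the same block) to obtain geometric growth $M_{m_{i+1}}>\tfrac{\alpha+1}{2}M_{m_i}$, bound the per-block count by $\overline{\cU}$ via $M_k\le H_{\max}$, and sum. The only cosmetic difference is that you chain the inequality on the $M$-values whereas the paper chains on the $H$-values; both give $r\le\overline{\cU}$ per block.
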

\begin{proof}
    We first claim that for any window $W \subseteq \mathbb{N}$ with length $\cB$, it contains at most $\overline{\cU}$ unsuccessful iterations. Choose two consecutive unsuccessful iterations $\cU(j), \cU(j+1) \in W$. Then it holds that $\cU(j+1) - \cU(j) \le \cB-1$, which implies
    \begin{align*}
        \cU(j) \in [\cU(j+1) - \cB + 1, \cU(j+1) - 1].
    \end{align*}
    Since $b_{\cU(j+1)} = \max\{1, \cU(j+1) - \cB\} \le \cU(j+1) - \cB + 1$, we have the interval inclusion
    \begin{align*}
        [\cU(j+1) - \cB + 1, \cU(j+1) - 1] \subseteq [b_{\cU(j+1)}, \cU(j+1) - 1].
    \end{align*}
    This yields $\cU(j) \in [b_{\cU(j+1)}, \cU(j+1) - 1]$, and thus $H_{\cU(j)} \le M_{\cU(j+1)}$ follows. Besides, $H_{\cU(j+1)} > (\alpha+1)M_{\cU(j+1)} / 2$ holds since $\cU(j+1)$ is an unsuccessful iteration. As a result, we obtain $H_{\cU(j+1)} > (\alpha+1)H_{\cU(j)} / 2$. Suppose there exists $J$ unsuccessful iterations inside window $W$. Without loss of generality, order these unsuccessful iterations as $\cU(j) < \cU(j+1) < \ldots < \cU(j+J-1)$. By recursively using the previous inequality, we have
    \begin{align*}
        H_{\cU(j+J-1)} \ge \left(\frac{\alpha + 1}{2}\right)^{J-1}H_{\cU(j)}.
    \end{align*}
    Similarly, since $\cU(j)$ is an unsuccessful iteration, it also holds that $H_{\cU(j)} > (\alpha + 1)M_{\cU(j)}/2 \ge (\alpha + 1)H_0/2$. Consequently, we obtain
    \begin{align*}
        H_{\cU(j+J-1)} \ge \left(\frac{\alpha + 1}{2}\right)^{J}H_{0}.
    \end{align*}
    Clearly, it follows that $J \le \overline{\cU}$. Partition $\{1, \ldots, k\}$ into $m \coloneq \lceil k /\cB \rceil$ disjoint blocks
    \begin{align*}
        \cJ_t \coloneq \{(t-1)\cB + 1, \ldots, \min\{t\cB, k\}\}, \ t = 1, \ldots, m,
    \end{align*}
    each of length is smaller than $\cB$. Therefore, summing over the disjoint blocks implies
    \begin{align*}
        |\cU \cap \{1, \ldots, k\}| = \sum_{t=1}^m |\cU \cap \cJ_t| \le \sum_{t=1}^m \overline{\cU} = \left\lceil \frac{k}{\cB}\right\rceil \overline{\cU}.
    \end{align*}
    The proof is completed.
\end{proof}

Based on the above lemma, one can establish the convergence results of \texttt{HAR-S} by using arguments similar to those in Theorems \ref{thm:CHA-AR convex} and \ref{thm:CHA-AR nonconvex}. For completeness, we provide the results as follows; the proof is omitted.
\begin{theorem}\label{thm:SHA-AR convex}
    Suppose the function $f$ is convex and satisfies \eqref{eq:Lipchitz continuous}. The budget satisfies condition \eqref{eq:requirement of budget}. Furthermore, assume that the initial sublevel set $\cF(x^0) \coloneq \{x \in \dom(\psi) \mid F(x) \le F(x^0)\}$ is bounded, i.e., $R \coloneq \sup_{x,y \in \cF(x^0)} \|x - y\| < \infty$. Then the sequence $\{x^k\}_{k \ge 0}$ generated by Algorithm~\ref{alg:slide ac pth-order method} satisfies
    \begin{align*}
        F(x^{k}) - F(x^\star) \le \frac{2\alpha H_{\max}R^{p+1}}{p!}\left(\frac{p+1}{k(1 - \overline{\cU} / \cB) - \overline{\cU}}\right)^p, \ \forall k \ge \frac{\overline{\cU}}{1 - \overline{\cU}/\cB}. 
    \end{align*}
\end{theorem}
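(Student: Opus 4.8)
The plan is to reuse the two-stage template already employed for Theorems~\ref{thm:HA-AR convex} and~\ref{thm:CHA-AR convex}: first show that the successful iterates of \texttt{HAR-S} still obey the rate \eqref{eq:convergence of successful iterates}, and then convert this into a rate for the whole sequence using the unsuccessful-iteration bound of Lemma~\ref{lemma:bounded unsuccessful index set with slide}.

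\emph{Step 1 (successful iterates).} First I would record that \texttt{HAR-S} retains the monotone step $x^k = \argmin_{x\in\{x^{k-0.5},x^{k-1}\}} F(x)$, so $\{F(x^k)\}$ is nonincreasing and $\|x^k - x^\star\| \le R$ for all $k$; moreover, since the sliding-window $M_k$ always includes $H_0$, we have $M_k \ge H_0 > 0$. Next, exactly as in the proof of Lemma~\ref{lemma:bounded unsuccessful index set}, the definition of $H_k$ together with \eqref{eq:taylor function value inequality} gives $H_k \le L_p$ for all $k \ge 1$, and because $M_k$ is a maximum over a subset of $\{H_0,H_1,\ldots\}$ we still get $M_k \le H_{\max}$, hence $\sigma_{\cS(j)} = \alpha M_{\cS(j)} \le \alpha H_{\max}$. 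The derivation of \eqref{eq:function value identity}--\eqref{eq:global inequality successful} and the recursion in the proof of Theorem~\ref{thm:HA-AR convex} only invoke the optimality condition of subproblem~\eqref{eq:sublroblem}, the identity \eqref{eq:function value identity}, the successful-iteration descent, the bounds $\sigma_{\cS(j)}\le\alpha H_{\max}$ and $L_p\le H_{\max}$, convexity of $F$, boundedness of $\cF(x^0)$, and monotonicity (which supplies $F(x^{\cS(j)-1})\le F(x^{\cS(j-1)})$). None of these depends on how $M_k$ aggregates past estimates, so the whole chain carries over verbatim and yields
\begin{align*}
    F(x^{\cS(j)}) - F(x^\star) \le \frac{2\alpha H_{\max} R^{p+1}}{p!}\left(\frac{p+1}{j}\right)^p, \ \forall j \ge 1.
\end{align*}

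\emph{Step 2 (transfer to the full sequence).} Writing $k = |\cS\cap\{1,\ldots,k\}| + |\cU\cap\{1,\ldots,k\}|$ and setting $j(k) \coloneq |\cS\cap\{1,\ldots,k\}|$, Lemma~\ref{lemma:bounded unsuccessful index set with slide} gives
\begin{align*}
    j(k) = k - |\cU\cap\{1,\ldots,k\}| \ge k - \left\lceil\frac{k}{\cB}\right\rceil\overline{\cU} \ge k\left(1 - \frac{\overline{\cU}}{\cB}\right) - \overline{\cU},
\end{align*}
which is at least $1$ once $k \ge \overline{\cU}/(1-\overline{\cU}/\cB)$; note that condition~\eqref{eq:requirement of budget} forces $\cB \ge \overline{\cU}+1$, so $1-\overline{\cU}/\cB > 0$ and the threshold is well defined. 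Since $j\mapsto\big((p+1)/j\big)^p$ is nonincreasing, plugging the lower bound on $j(k)$ into the Step~1 estimate and using $F(x^k)\le F(x^{\cS(j(k))})$ (monotonicity of \texttt{HAR-S}) gives the claimed bound.

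\emph{Main obstacle.} There is essentially no new analytical difficulty, which is why the paper omits the proof; the only point requiring care is to check that every inequality in the successful-iterate part of Theorem~\ref{thm:HA-AR convex} uses only the uniform bound $M_k\le H_{\max}$ (and monotonicity), rather than the nondecreasing property of $\{M_k\}$ that was specific to \texttt{HAR}. Once that verification is in place, Step~1 is immediate and Step~2 is the same index bookkeeping already carried out in Theorem~\ref{thm:CHA-AR convex}.
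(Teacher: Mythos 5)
Your proposal is correct and follows exactly the route the paper intends (it explicitly says the proof is omitted because it ``uses arguments similar to those in Theorems~\ref{thm:CHA-AR convex} and~\ref{thm:CHA-AR nonconvex}'', based on Lemma~\ref{lemma:bounded unsuccessful index set with slide}). You are right to verify the key point: the successful-iterate rate \eqref{eq:convergence of successful iterates} derived in Theorem~\ref{thm:HA-AR convex} uses only $M_k \le H_{\max}$, the monotone safeguard step, and the successful-iteration descent inequality, none of which depend on $\{M_k\}$ being nondecreasing, so it transfers unchanged to the sliding-window variant before the index bookkeeping of Lemma~\ref{lemma:bounded unsuccessful index set with slide} finishes the argument.
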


\begin{theorem}\label{thm:SHA-AR nonconvex}
   Suppose that function $f$ satisfies \eqref{eq:Lipchitz continuous}. The budget satisfies condition \eqref{eq:requirement of budget}. Let the sequence $\{x^k\}_{k \ge 0}$ be generated by Algorithm~\ref{alg:slide ac pth-order method}. Then for any accuracies $\epsilon_g, \epsilon_H > 0$, the algorithm requires at most
   \begin{align*}
       K = \frac{\overline{\cU}}{1 - \overline{\cU}/\cB} + \cO\left(\frac{F(x^0) - F(x^\star)}{(\alpha - 1)(1 - \overline{\cU}/\cB)H_0} \cdot \max\left\{\left(\frac{\alpha H_{\max}}{ \epsilon_g} \right)^{\frac{p+1}{p}}, \left(\frac{\alpha H_{\max}}{\epsilon_H} \right)^{\frac{p+1}{p-1}}\right\}\right)
   \end{align*}
   iterations to return an iterate $\Tilde{x}$ such that $\dist(0, \partial F(\Tilde{x})) \le \epsilon_g$. Furthermore, if the nonsmooth term vanishes, i.e, $\psi \equiv 0$, the sequence additionally satisfies $\lambda_{\min}(\nabla^2 F(\Tilde{x})) \ge -\epsilon_{H}$.
\end{theorem}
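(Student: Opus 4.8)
The plan is to follow the proof of Theorem~\ref{thm:CHA-AR nonconvex} almost verbatim, substituting Lemma~\ref{lemma:bounded unsuccessful index set with slide} for Lemma~\ref{lemma:bounded unsuccessful index set with cycle} wherever the count of unsuccessful iterations enters. The first thing I would check is that Lemma~\ref{lemma:relate d with gradient Hessian} applies unchanged to \texttt{HAR-S}: its proof uses only the first- and second-order optimality of the subproblem~\eqref{eq:sublroblem} together with the bounds $\sigma_{\cS(j)} = \alpha M_{\cS(j)} \le \alpha H_{\max}$ and $L_p \le H_{\max}$. These hold for the sliding-window update because $M_k = \max\{H_0, H_{b_k}, \ldots, H_{k-1}\}$ always retains $H_0$ and, by~\eqref{eq:taylor function value inequality}, every local estimate satisfies $H_j \le L_p$, so $M_k \in [H_0, H_{\max}]$ for all $k$, exactly as in \texttt{HAR}. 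Hence $\dist(0, \partial F(x^{\cS(j)})) \le \tfrac{2\alpha H_{\max}}{p!}\|d^{\cS(j)}\|^p$ and, when $\psi \equiv 0$, $-\lambda_{\min}(\nabla^2 F(x^{\cS(j)})) \le \tfrac{2\alpha H_{\max}}{(p-1)!}\|d^{\cS(j)}\|^{p-1}$ for every $j \ge 1$.

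Next I would reproduce the function-value bookkeeping. Combining the definition of the successful index set in~\eqref{eq:successful and unsuccessful index sets} with the descent inequality~\eqref{eq:F function value decrease} gives $F(x^{\cS(j)}) - F(x^{\cS(j)-1}) \le -\tfrac{(\alpha-1)M_{\cS(j)}}{2(p+1)!}\|d^{\cS(j)}\|^{p+1}$; since $M_{\cS(j)} \ge H_0$ and \texttt{HAR-S} is monotone, summing over $j = 1, \ldots, J$ telescopes to $\tfrac{(\alpha-1)H_0}{2(p+1)!}\sum_{j=1}^J \|d^{\cS(j)}\|^{p+1} \le F(x^0) - F(x^\star)$. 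Taking the smallest term yields precisely~\eqref{eq:d min nonconve inequality}, i.e., $\min_{1 \le j \le J}\|d^{\cS(j)}\|^{p+1} \le \tfrac{2(p+1)!(F(x^0)-F(x^\star))}{(\alpha-1)H_0 J}$, with no modification needed for the sliding window. Then I would convert the count: writing $j(\min)$ for the minimizing index and $K = \cS(J)$, Lemma~\ref{lemma:bounded unsuccessful index set with slide} gives $K = J + |\cU \cap \{1,\ldots,K\}| \le J + \lceil K/\cB\rceil \overline{\cU} \le J + (K/\cB + 1)\overline{\cU}$, hence $J \ge K(1 - \overline{\cU}/\cB) - \overline{\cU}$. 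Substituting this into the bound on $\|d^{\cS(j(\min))}\|^{p+1}$ and applying Lemma~\ref{lemma:relate d with gradient Hessian} at the index $\cS(j(\min))$ produces $\dist(0, \partial F(x^{\cS(j(\min))})) \le \tfrac{2\alpha H_{\max}}{p!}\big(\tfrac{2(p+1)!(F(x^0)-F(x^\star))}{(\alpha-1)H_0(K(1-\overline{\cU}/\cB)-\overline{\cU})}\big)^{p/(p+1)}$ and, when $\psi \equiv 0$, the analogous $\lambda_{\min}$ bound; solving each inequality for $K$ and taking the maximum delivers the stated complexity, with the offset $\overline{\cU}/(1 - \overline{\cU}/\cB)$.

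I do not expect a genuine obstacle: the only structural difference from the cyclic case is that a sliding window never fully resets, so one cannot cut the iteration axis into independent cycles, but that difficulty is already absorbed into Lemma~\ref{lemma:bounded unsuccessful index set with slide}, which instead shows that every length-$\cB$ window carries at most $\overline{\cU}$ unsuccessful iterations (consecutive unsuccessful iterations inside a window force geometric growth of the local estimates, capped by $H_{\max}$) and then covers $\{1,\ldots,k\}$ by $\lceil k/\cB\rceil$ disjoint blocks each of length at most $\cB$. Given that lemma, every remaining step is a line-by-line transcription of the proofs of Theorems~\ref{thm:CHA-AR convex} and~\ref{thm:CHA-AR nonconvex}, so the only real work is verifying that nothing in those arguments used the cyclic refresh beyond the $\lceil k/\cB\rceil\overline{\cU}$ bound on the unsuccessful iterates, which it does not.
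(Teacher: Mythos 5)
Your proposal is correct and takes exactly the route the paper intends: the authors omit the proof of Theorem~\ref{thm:SHA-AR nonconvex}, stating only that it follows by the same argument as Theorem~\ref{thm:CHA-AR nonconvex} with Lemma~\ref{lemma:bounded unsuccessful index set with slide} replacing Lemma~\ref{lemma:bounded unsuccessful index set with cycle}. You have reconstructed precisely that: you verify $M_k \in [H_0, H_{\max}]$ so that Lemma~\ref{lemma:relate d with gradient Hessian} and inequality~\eqref{eq:d min nonconve inequality} carry over unchanged, and then apply the unsuccessful-iteration bound $|\cU \cap \{1,\ldots,K\}| \le \lceil K/\cB\rceil\overline{\cU}$ to convert $J$ into $K$.
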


\begin{remark}
    Note that when $\cB = \infty$, both methods \texttt{HAR-C} and \texttt{HAR-S} reduce to \texttt{HAR}. In this case, the iteration complexity results established above match those in Theorem \ref{thm:HA-AR convex} and Corollary \ref{corollary:HA-AR convex}. While introducing the budget is expected to worsen the worst-case complexity guarantee, the numerical experiments in Section \ref{sec.experiments} show the practical superiority of using partial historical information.
\end{remark}

\section{Acceleration for Convex Objectives}
In this section, we develop an accelerated history-aware adaptive regularization method (\texttt{HAR-A}), presented in Algorithm \ref{alg:acc HA-AR}, for convex objectives. The overall algorithm design is motivated by the original accelerated $p$th-order tensor method proposed in \cite{nesterov2021implementable}. We maintain four sequences in \texttt{HAR-A}: $\{x^k\}_{k \ge 0}$, $\{v^k\}_{k \ge 0}$, and $\{y^k\}_{k \ge 0}$ are the sequences of iterates, and $\{s^k\}_{k \ge 0}$ is used to accumulate past derivative information. It is well-known that Nesterov's acceleration can render optimization methods non-monotone (e.g., the Nesterov accelerated gradient method). Therefore, in the development of \texttt{HAR-A}, we introduce a repeat mechanism (Line~\ref{ref.line repeat} of Algorithm \ref{alg:acc HA-AR}) to ensure that all sequences remain properly synchronized and updated.
\begin{algorithm}[h]
  \caption{Accelerated History-Aware Adaptive Regularization Method}
  \label{alg:acc HA-AR}
  \begin{algorithmic}[1]
    \item \textbf{Input:} Initial point $x^0 = v^0 \in \dom(\psi)$, $s^0 = 0$, initial guess $H_0 = M_0 >0$, parameter $\alpha>1$
    \item \textbf{Initialization:}  set $\beta = (\alpha + 1)/2$, set parameters $C(\alpha,\beta)$ as in \eqref{eq.Cp} and $A_k$, $a_k$ as in \eqref{eq.update a and A}
    \item \textbf{For} $k=0,1,\ldots$ \textbf{do}
    \item \quad Update the iterate $ y^k = \frac{A_k}{A_k+a_k}x^k + \frac{a_k}{A_k+a_k}v^k$;
    \item \quad \textbf{Repeat} \label{ref.line repeat}
    \item \quad \quad Calculate the regularization parameter $\sigma_k = \alpha M_k$;
    \item \quad \quad Compute the search direction
    \begin{align*}
        d^k = \argmin_{d \in \dom(\psi)} \Omega_p^{\sigma_k}(y^{k} + d; y^{k}) + \psi(y^{k} + d);   
    \end{align*}
    \item \quad \quad Update the iterate $x^{k+1} = y^k + d^k$;
    \item \quad \quad Set the subgradient $g^{k+1} \coloneq \nabla f(x^{k+1}) - \nabla \Omega_p^{\sigma_k}(y^{k} + d^k; y^{k}) \in \partial F(x^{k+1})$;
    \item \quad \quad Update the local Lipschitz estimate \label{line:acc update local estimate}
    \begin{align*}
       \frac{H_{k+1}}{p!} = 
            \frac{\|g^{k+1}+\frac{\sigma_k}{p!}\|d^k\|^{p-1}d^k\|}
            {\|d^k\|^p};
    \end{align*} 
     \item \quad \quad \textbf{If} $H_{k+1} \ge \beta M_k$ \textbf{then}
      \item \quad \quad \quad Update the adaptive parameter $M_k = H_{k+1}$;
    \item \quad \quad \textbf{Else}
    \item \quad \quad \quad \textbf{Break};
    \item \quad \quad \textbf{End If};
     \item \quad Update the adaptive parameter $M_{k+1} = \max\{M_k,H_{k+1}\}$;
      \item \quad Update  $s^{k+1} = s^k + a_k g^{k+1}$;
      \item \quad Update the iterate $v^{k+1} = v^0 - \big((p+1)C_p(\alpha,\beta)M_{k+1}\big)^{-1/p}\|s^{k+1}\|^{-(p-1)/p}s^{k+1}$; \label{line:update vk}
    \item \textbf{End For}
  \end{algorithmic}
\end{algorithm}

The theoretical analysis comes from a modified estimating sequence technique in \cite{nesterov2008accelerating}. The global convergence of \texttt{HAR-A} is guaranteed by maintaining the following two relations across iterations
\begin{subequations}\label{eq.es relation}
    \begin{align}
        \label{eq.es relation 1}
         & \phi_k^\star  \geq A_k F(x^k), \  \phi_k^\star \coloneq \min_{x\in\dom(\psi)}\phi_k(x), \\
        \label{eq.es relation 2}
         & \phi_k(x)\leq A_kF(x)+M_k C_p(\alpha,\beta)\|x-x^0\|^{p+1}, \ \forall x\in\dom(\psi).
    \end{align}
\end{subequations}
Here $\{\phi_k(x)\}_{k\geq 0}$ is a sequence of functions that approximate $F(x)$ from both above and below. As we will show later, $v^k$ is the optimal point of $\phi_k(\cdot)$, i.e., $\phi_k(v^k) = \phi_k^\star$. Besides, $\{A_k\}_{k\geq 0}$ is a sequence that measures the convergence rate of the sequence $\{x^k\}_{k\geq 0}$. As a direct result of relationship \eqref{eq.es relation}, we have
\begin{equation*}
    F(x^k)-F(x^\star)\leq \frac{M_kC_p(\alpha,\beta)\|x^0-x^\star\|^{p+1}}{A_k}, \ \forall k \ge 0.
\end{equation*}
Therefore, the global complexity directly follows from the choice of $A_k$ and $M_k$. For the upcoming analysis, we first complete the definition of $A_k,\phi_k(x)$ in the estimating sequence framework as follows:
\begin{gather}
    \label{eq.update a and A}
    A_k=k^{p+1}, \ a_k = A_{k+1}-A_k = (k+1)^{p+1}-k^{p+1},\\
    \label{eq.Cp}
    C_p(\alpha,\beta) = \frac{(p+1)^{(3p+1)/2}(p-1)^{(3p-1)/2}}{2^p(\alpha^2-\beta^2)^{(p-1)/2}}, \\
    \label{eq.update phi}
    \begin{split}
        \phi_0(x) &= M_0C_p(\alpha,\beta)\|x-x^0\|^{p+1}, \\
        \phi_{k+1}(x) &= \phi_k(x)+a_k\left (F(x^{k+1})+\langle g^{k+1},x-x^{k+1}\rangle \right ) \\
                      &\quad +C_p(\alpha,\beta)(M_{k+1}-M_k)\|x-x^0\|^{p+1}.
    \end{split}
\end{gather}
We begin with some basic properties of the estimating sequence, which are modified from Section 3 in \cite{nesterov2021implementable}.
\begin{lemma}
    \label{lem.magnitude a and A}
    For any $k \geq 0$, the sequences $\{A_k\}_{k \ge 0}$ and $\{a_k\}_{k \ge 0}$ satisfy
    \begin{equation}
        \label{eq.magnitude a and A}
        A_{k+1}^{-1}a_k^{(p+1)/p} \leq (p+1)^{(p+1)/p}.
    \end{equation}
\end{lemma}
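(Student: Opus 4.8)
The plan is to reduce the claimed inequality to the elementary estimate $a_k \le (p+1)(k+1)^p$ and then prove the latter by convexity of the map $t \mapsto t^{p+1}$. Since $A_{k+1} = (k+1)^{p+1} > 0$ for every $k \ge 0$, inequality \eqref{eq.magnitude a and A} is equivalent, upon multiplying through by $A_{k+1}$ and raising both sides to the power $p/(p+1)$ (a monotone operation on nonnegative reals), to
\begin{equation*}
    a_k \le (p+1)\,A_{k+1}^{p/(p+1)} = (p+1)(k+1)^p .
\end{equation*}
So it suffices to establish this last bound.

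To prove $a_k = (k+1)^{p+1} - k^{p+1} \le (p+1)(k+1)^p$, I would use that $\varphi(t) \coloneq t^{p+1}$ is continuously differentiable and convex on $[0,\infty)$ for $p \ge 1$, with derivative $\varphi'(t) = (p+1)t^p$ nondecreasing on $[0,\infty)$. The fundamental theorem of calculus then gives $a_k = \varphi(k+1) - \varphi(k) = \int_k^{k+1} (p+1)t^p\,\mathrm{d}t \le (p+1)(k+1)^p$, since the integrand is bounded above by its value at the right endpoint and the interval of integration has length one. (Equivalently, convexity of $\varphi$ yields $\varphi(k+1) - \varphi(k) \le \varphi'(k+1)$ directly; or one may invoke the mean value theorem to write $a_k = (p+1)\xi^p$ for some $\xi \in (k,k+1)$.)

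Combining the two steps gives $a_k^{(p+1)/p} \le \big((p+1)(k+1)^p\big)^{(p+1)/p} = (p+1)^{(p+1)/p}(k+1)^{p+1} = (p+1)^{(p+1)/p} A_{k+1}$, and dividing by $A_{k+1}$ recovers \eqref{eq.magnitude a and A}. There is essentially no obstacle here; the only point deserving a word of care is the case $k = 0$, where $A_0 = 0$, but since the statement only divides by $A_{k+1} = (k+1)^{p+1}$, which is strictly positive, this causes no trouble (indeed $a_0 = 1 \le p+1$).
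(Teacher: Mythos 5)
Your proof is correct. The paper itself states Lemma~\ref{lem.magnitude a and A} without giving an explicit argument, deferring instead to Section~3 of \cite{nesterov2021implementable}; the standard proof there is precisely the elementary mean-value/integral estimate you give, namely $a_k=(k+1)^{p+1}-k^{p+1}\le (p+1)(k+1)^p$ followed by raising to the power $(p+1)/p$ and dividing by $A_{k+1}=(k+1)^{p+1}$. Your reduction and the FTC bound are both sound, so this is a faithful and complete fill-in of the omitted proof rather than a genuinely different route.
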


\begin{lemma}
    \label{lem.property of phi}
    For any $k \geq 0$, the iterate $v^k$ is the unique minimizer of $\phi_k(x)$. Besides, it holds that
            \begin{equation}
                  \label{eq.phi_k phi_low}
                  \phi_k(x)\geq \phi_k^\star + \frac{M_kC_p(\alpha,\beta)}{2^{p-1}}\|x-v^k\|^{p+1}.
              \end{equation}
\end{lemma}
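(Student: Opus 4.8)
The plan is to establish that each $\phi_k$ is an affine function of $x$ plus a fixed positive multiple of $\|x-x^0\|^{p+1}$, which renders it uniformly convex with an explicitly computable minimizer. First I would prove by induction on $k$ that $\phi_k(x) = \inner{s^k}{x-x^0} + \gamma_k + M_k C_p(\alpha,\beta)\|x-x^0\|^{p+1}$ for some scalar $\gamma_k$ that plays no role in the statement. The base case is the definition of $\phi_0$ in \eqref{eq.update phi} together with $s^0=0$; for the inductive step, in \eqref{eq.update phi} the term $a_k\bigl(F(x^{k+1}) + \inner{g^{k+1}}{x-x^{k+1}}\bigr)$ contributes $a_k g^{k+1}$ to the linear coefficient — which, via $s^{k+1} = s^k + a_k g^{k+1}$, turns $s^k$ into $s^{k+1}$ — plus a constant, while the term $C_p(\alpha,\beta)(M_{k+1}-M_k)\|x-x^0\|^{p+1}$ makes the coefficient of $\|x-x^0\|^{p+1}$ telescope from $M_k C_p(\alpha,\beta)$ to $M_{k+1}C_p(\alpha,\beta)$.

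Next I would invoke the classical fact that $h(x)\coloneq\|x-x^0\|^{p+1}$ is continuously differentiable with $\nabla h(x) = (p+1)\|x-x^0\|^{p-1}(x-x^0)$ and satisfies $h(y) \ge h(x) + \inner{\nabla h(x)}{y-x} + 2^{1-p}\|y-x\|^{p+1}$ for all $x,y$, i.e.\ it is uniformly convex of degree $p+1$ (cf.\ \eqref{eq.uniformly convex of p}); this follows from the inequality $\inner{\|a\|^{p-1}a - \|b\|^{p-1}b}{a-b}\ge 2^{1-p}\|a-b\|^{p+1}$, which I would cite (e.g.\ \cite{nesterov_lectures_2018}) rather than reprove. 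Combined with the first step, $\phi_k$ is the sum of an affine function and $M_k C_p(\alpha,\beta)\,h$, hence $\phi_k(y)\ge\phi_k(x)+\inner{\nabla\phi_k(x)}{y-x}+\frac{M_kC_p(\alpha,\beta)}{2^{p-1}}\|y-x\|^{p+1}$ for all $x,y$; in particular $\phi_k$ is strictly convex and coercive, and thus has a unique minimizer.

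To identify that minimizer I would solve $\nabla\phi_k(x) = s^k + (p+1)M_kC_p(\alpha,\beta)\|x-x^0\|^{p-1}(x-x^0) = 0$: taking norms gives $\|x-x^0\| = \bigl(\|s^k\|/((p+1)M_kC_p(\alpha,\beta))\bigr)^{1/p}$, and substituting this back produces $x = x^0 - \bigl((p+1)C_p(\alpha,\beta)M_k\bigr)^{-1/p}\|s^k\|^{-(p-1)/p}s^k$, which is precisely the update of $v^k$ in Line~\ref{line:update vk} (with the convention that the last term is $0$, and $v^k = x^0$, when $s^k = 0$). Hence $v^k$ is the unique minimizer of $\phi_k$ and $\phi_k^\star = \phi_k(v^k)$; setting $x = v^k$ in the uniform-convexity inequality of the previous paragraph, where $\nabla\phi_k(v^k) = 0$, yields $\phi_k(y) \ge \phi_k^\star + \frac{M_kC_p(\alpha,\beta)}{2^{p-1}}\|y-v^k\|^{p+1}$ for all $y$, which is exactly \eqref{eq.phi_k phi_low}.

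The only ingredient beyond bookkeeping is the uniform-convexity constant $2^{1-p}$ for $\|\cdot\|^{p+1}$ used in the second paragraph; this is standard but its proof (through the vector inequality for the map $a\mapsto\|a\|^{p-1}a$) is somewhat delicate, so I expect that to be the step requiring the most care, or simply to be cited. A secondary caveat is the $s^k=0$ edge case when matching the closed-form minimizer to Line~\ref{line:update vk}, and, if $\phi_k^\star$ is intended as the minimum over $\dom(\psi)$ rather than over all of $\reals^n$, verifying that $v^k\in\dom(\psi)$ — failing which one replaces $\nabla\phi_k(v^k)=0$ by the variational inequality $\inner{\nabla\phi_k(v^k)}{y-v^k}\ge 0$ for all $y\in\dom(\psi)$, which still gives \eqref{eq.phi_k phi_low}.
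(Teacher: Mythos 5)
Your proof is correct and follows the same route as the paper: expand $\phi_k$ into an affine function plus $M_k C_p(\alpha,\beta)\|x-x^0\|^{p+1}$, solve the first-order optimality condition to obtain the closed form matching $v^k$, and invoke uniform convexity of $\|\cdot\|^{p+1}$ with constant $2^{1-p}$ to get \eqref{eq.phi_k phi_low}. You are more explicit than the paper about the uniform-convexity constant (and correctly state degree $p+1$, where the paper's proof says ``degree $p$''), and you usefully flag the $s^k=0$ edge case and the $\dom(\psi)$ subtlety, both of which the paper's terse proof passes over.
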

\begin{proof}
    First note that 
    \begin{equation}
        \label{eq.phi recursive}
        \phi_k(x) =C_p(\alpha,\beta)M_k\|x-x^0\|^{p+1}+\sum_{i=0}^{k-1}a_i\left (F(x^{i+1})+\langle \nabla g_{i+1},x-x^{i+1}\rangle \right).
    \end{equation}
    Then from the optimality condition, we have
    \begin{equation*}
        \begin{aligned}
            0 = \nabla \phi_k(x) = \ &\nabla \left(C_p(\alpha,\beta)M_k\|x-x^0\|^{p+1}\right)+ \sum_{i=0}^{k-1} a_i g_{i+1}                \\
                              = \ &(p+1)C_p(\alpha,\beta)M_k\|x-x^0\|^{p-1}(x-x^0)+\sum_{i=0}^{k-1} a_i g_{i+1}.
        \end{aligned}
    \end{equation*}
    Solving the optimality condition and observing the update rule for $s^k$ yields
    \begin{equation*}
        x = v^0 - \frac{1}{\left[(p+1)C_p(\alpha,\beta)M_k\right]^{1/p}\|s_k\|^{(p-1)/p}}s_{k},
    \end{equation*}
    which is precisely $v^k$ as defined in Line \ref{line:update vk} of Algorithm \ref{alg:acc HA-AR}. Finally, Equation \eqref{eq.phi_k phi_low} comes from the fact that $\phi_k(x)$ is uniformly convex of degree $p$.
\end{proof}

\begin{lemma}\label{lem.upper bound L}
Suppose $f$ satisfies \eqref{eq:Lipchitz continuous}. Then during the whole process of Algorithm \ref{alg:acc HA-AR}, the total number of repetitions at each iteration is bounded by $\left\lceil\log _{\beta}\frac{H_{\max}}{H_0}\right\rceil$. Therefore, $H_k$ and $M_k$ has an upper bound $H_{\max}$.
\end{lemma}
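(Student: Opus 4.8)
The plan is to adapt the argument of Lemma~\ref{lemma:bounded unsuccessful index set} to the inner \textbf{Repeat} loop of Algorithm~\ref{alg:acc HA-AR}: each pass that triggers the assignment $M_k \leftarrow H_{k+1}$ (the \textbf{If} branch) multiplies $M_k$ by at least $\beta$, while $H_{k+1}$ can never exceed the global Lipschitz constant $L_p$; since $M_k$ enters the loop no smaller than $H_0$, only boundedly many such passes can occur, and the final values of $H_k$ and $M_k$ therefore stay below $H_{\max} = \max\{H_0, L_p\}$.

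First I would put the local estimate in Line~\ref{line:acc update local estimate} into a transparent form. Differentiating the regularizer gives $\nabla_y \Omega_p^{\sigma_k}(y^k + d; y^k) = \nabla \T_p(y^k + d; y^k) + \frac{\sigma_k}{p!}\|d\|^{p-1}d$; substituting $d = d^k$ and $x^{k+1} = y^k + d^k$ into the definition of $g^{k+1}$ yields
$$g^{k+1} + \frac{\sigma_k}{p!}\|d^k\|^{p-1}d^k = \nabla f(x^{k+1}) - \nabla \T_p(x^{k+1}; y^k),$$
so that $H_{k+1}/p! = \|\nabla f(x^{k+1}) - \nabla \T_p(x^{k+1}; y^k)\|/\|d^k\|^p$ whenever $d^k \ne 0$ (if $d^k = 0$ then $y^k$ already satisfies $0 \in \partial F(y^k)$ and the iteration is trivial). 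Applying the Taylor gradient inequality~\eqref{eq:taylor gradient inequality} with $y = x^{k+1}$, $x = y^k$ — which for $p = 1$ is just~\eqref{eq:Lipchitz continuous}, since $\nabla \T_1(\cdot; y^k) = \nabla f(y^k)$ — bounds the numerator by $\frac{L_p}{p!}\|d^k\|^p$, hence $H_{k+1} \le L_p$.

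Next I would count the repetitions at a fixed iteration $k$. Let $M_k^{(0)}$ be the value of $M_k$ on entry and $M_k^{(1)}, \dots, M_k^{(r)}$ the values produced by the $r$ successive passes taking the \textbf{If} branch. Each such pass gives $M_k^{(i)} = H_{k+1} \ge \beta M_k^{(i-1)}$, so $M_k^{(r)} \ge \beta^r M_k^{(0)} \ge \beta^r H_0$, using that $\{M_k\}$ is nondecreasing and $M_0 = H_0$. On the other hand, $M_k^{(r)} = H_{k+1} \le L_p \le H_{\max}$ by the previous step, so $\beta^r H_0 \le H_{\max}$ and therefore $r \le \lceil \log_\beta(H_{\max}/H_0)\rceil$, as claimed. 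Finally, since every estimate produced by the algorithm satisfies $H_{k+1} \le L_p \le H_{\max}$, while $M_0 = H_0 \le H_{\max}$ and $M_{k+1} = \max\{M_k, H_{k+1}\}$, a straightforward induction on $k$ — which also accounts for the in-loop reassignments $M_k \leftarrow H_{k+1} \le L_p$ — gives $M_k \le H_{\max}$ and $H_k \le H_{\max}$ for all $k$.

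The only delicate point is the loop bookkeeping: one must observe that $M_k$ is only ever replaced by a quantity bounded above by $L_p$, which is exactly what halts the geometric growth $M_k^{(i)} \ge \beta M_k^{(i-1)}$ after at most $\lceil\log_\beta(H_{\max}/H_0)\rceil$ steps. Notably, no convexity of $f$ is used here — the lemma rests only on~\eqref{eq:Lipchitz continuous} — so it can be invoked uniformly in the subsequent accelerated analysis.
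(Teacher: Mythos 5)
Your proof is correct and follows the same geometric-growth-plus-ceiling argument that the paper invokes by appealing to Lemma~\ref{lemma:bounded unsuccessful index set}. You do, however, commendably supply the one adaptation the paper's terse ``identical'' glosses over: the local estimate in Algorithm~\ref{alg:acc HA-AR} is a \emph{gradient}-residual ratio rather than a function-value-residual ratio, so the bound $H_{k+1}\le L_p$ must be obtained by rewriting $g^{k+1}+\tfrac{\sigma_k}{p!}\|d^k\|^{p-1}d^k = \nabla f(x^{k+1})-\nabla \T_p(x^{k+1};y^k)$ and invoking \eqref{eq:taylor gradient inequality} (with \eqref{eq:Lipchitz continuous} itself for $p=1$) instead of \eqref{eq:taylor function value inequality}. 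Your direct counting of the in-loop updates $M_k^{(i)}\ge\beta M_k^{(i-1)}$ against the ceiling $M_k^{(r)}\le L_p\le H_{\max}$ is equivalent to the paper's contradiction argument, and the induction showing $M_k\le H_{\max}$ across iterations (including the in-loop reassignments) is handled cleanly.
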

\begin{proof}
    The proof is identical to the proof of Lemma~\ref{lemma:bounded unsuccessful index set}.
\end{proof}
Now we demonstrate that the generated search direction $d_k$ in \texttt{HAR-A} satisfies the following property, which is essential for acceleration (see Corollary 1 in \cite{nesterov2021implementable}).
\begin{lemma}
    \label{lem.ratio pass}
    Suppose $f$ is convex and satisfies \eqref{eq:Lipchitz continuous}. Then for each iteration of Algorithm~\ref{alg:acc HA-AR}, it holds that
    \begin{equation}
        \label{eq.ratio pass success}
        \langle y^k-x^{k+1},g^{k+1}\rangle > D(\alpha,\beta) \frac{\|g^{k+1}\|^{(p+1)/p}}{M_k^{1/p}}, \ k \ge 0,
    \end{equation}
    where the constant $D(\alpha,\beta) \coloneq (\alpha^2-\beta^2)^{(p-1)/2p}\left(\frac{2p}{p-1}\right)\left(\frac{p-1}{p+1}\right)^{(p+1/2p)}$.
\end{lemma}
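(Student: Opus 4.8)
The plan is to express the left-hand side $\langle y^k - x^{k+1}, g^{k+1}\rangle$ in terms of the search direction $d^k$ and then bound $\|g^{k+1}\|$ from above by a multiple of $\|d^k\|^p$, so that the inequality becomes a purely algebraic comparison between powers of $\|d^k\|$. First I would use $x^{k+1} = y^k + d^k$, so the inner product is $-\langle d^k, g^{k+1}\rangle$. From the definition of $g^{k+1}$ and the first-order optimality of the subproblem (which gives $\nabla\Omega_p^{\sigma_k}(y^k+d^k;y^k) + \psi'(x^{k+1}) = 0$), one has $g^{k+1} = \nabla f(x^{k+1}) - \nabla\T_p(x^{k+1};y^k) - \tfrac{\sigma_k}{p!}\|d^k\|^{p-1}d^k$. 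Taking the inner product with $-d^k$ and using convexity together with the sign of the regularization term should yield a lower bound of the form $\langle y^k - x^{k+1}, g^{k+1}\rangle \ge \tfrac{\sigma_k}{p!}\|d^k\|^{p+1} - \langle \nabla f(x^{k+1}) - \nabla\T_p(x^{k+1};y^k), d^k\rangle$; bounding the cross term via \eqref{eq:taylor gradient inequality} as $\tfrac{L_p}{p!}\|d^k\|^{p+1}$ leaves $\tfrac{\sigma_k - L_p}{p!}\|d^k\|^{p+1}$, which is strictly positive because the repeat/break mechanism guarantees $H_{k+1} < \beta M_k < \alpha M_k = \sigma_k$ at termination and (by Lemma~\ref{lem.upper bound L}) $L_p \le H_{\max}$ ties in with $\sigma_k = \alpha M_k$.

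Next I would bound $\|g^{k+1}\|$ from above. By the triangle inequality and \eqref{eq:taylor gradient inequality}, $\|g^{k+1}\| \le \|\nabla f(x^{k+1}) - \nabla\T_p(x^{k+1};y^k)\| + \tfrac{\sigma_k}{p!}\|d^k\|^p \le \tfrac{L_p + \sigma_k}{p!}\|d^k\|^p$. Actually, the cleanest route is to use the \emph{break condition} directly: at the accepted iteration, the local estimate satisfies $\tfrac{H_{k+1}}{p!} = \tfrac{\|g^{k+1} + (\sigma_k/p!)\|d^k\|^{p-1}d^k\|}{\|d^k\|^p}$, and $g^{k+1} + \tfrac{\sigma_k}{p!}\|d^k\|^{p-1}d^k = \nabla f(x^{k+1}) - \nabla\T_p(x^{k+1};y^k)$, so in fact $\|g^{k+1}\| \le \tfrac{H_{k+1}}{p!}\|d^k\|^p + \tfrac{\sigma_k}{p!}\|d^k\|^p = \tfrac{H_{k+1} + \sigma_k}{p!}\|d^k\|^p$, and since $H_{k+1} < \beta M_k$ this gives $\|g^{k+1}\| \le \tfrac{(\alpha+\beta)M_k}{p!}\|d^k\|^p$. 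Feeding this into the target, it suffices to show $\tfrac{\sigma_k - L_p}{p!}\|d^k\|^{p+1} > D(\alpha,\beta)\big(\tfrac{(\alpha+\beta)M_k}{p!}\big)^{(p+1)/p}\tfrac{\|d^k\|^{p+1}}{M_k^{1/p}}$, i.e.\ after cancelling $\|d^k\|^{p+1}$ and one power of $M_k^{1/p}$, a dimension-free inequality in $\alpha, \beta$ only — this is where the specific constant $D(\alpha,\beta)$ must be reverse-engineered.

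The main obstacle I anticipate is getting the constant $D(\alpha,\beta)$ exactly right: the lower bound on the left side uses $\sigma_k - L_p \ge \alpha M_k - H_{\max}$, but to make the bound depend only on $M_k$ one needs to exploit that on the accepted step $H_{k+1} < \beta M_k$ forces $M_k$ to already exceed roughly $H_{\max}/\beta^{\text{(something)}}$ only in the worst case — so more carefully, one should lower-bound $\sigma_k - L_p$ using $L_p \le H_{k+1}/(\text{ratio})$ is \emph{not} available, so instead I expect the argument to replace $L_p$ by a bound coming from the Lipschitz inequality applied to the \emph{rejected} previous attempts, or simply to use $\sigma_k - L_p \ge \alpha M_k - L_p$ and note separately that on a break one has $L_p \le H_{k+1} < \beta M_k$ is \emph{false in general} — rather $H_{k+1}$ could be tiny. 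The resolution is that $H_{k+1}$ is a \emph{lower} proxy: the true bound $\tfrac{H_{k+1}}{p!}\|d^k\|^p = \|\nabla f(x^{k+1}) - \nabla\T_p(x^{k+1};y^k)\|$ combined with \eqref{eq:taylor gradient inequality} only gives $H_{k+1} \le L_p$, so $\sigma_k - L_p \ge \sigma_k - ?$ is not directly controlled; hence the correct move is to write the left side as $\ge \tfrac{\sigma_k}{p!}\|d^k\|^{p+1} - \|\nabla f(x^{k+1}) - \nabla\T_p\|\cdot\|d^k\| \ge \tfrac{\sigma_k}{p!}\|d^k\|^{p+1} - \tfrac{H_{k+1}}{p!}\|d^k\|^{p+1} = \tfrac{\alpha M_k - H_{k+1}}{p!}\|d^k\|^{p+1} > \tfrac{(\alpha-\beta)M_k}{p!}\|d^k\|^{p+1}$, using the break condition $H_{k+1} < \beta M_k$ \emph{directly} and avoiding $L_p$ entirely. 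Then the final inequality to verify is $\tfrac{(\alpha-\beta)M_k}{p!}\|d^k\|^{p+1} > D(\alpha,\beta)\big(\tfrac{(\alpha+\beta)M_k}{p!}\big)^{(p+1)/p} M_k^{-1/p}\|d^k\|^{p+1}$, which after simplification reduces to checking $(\alpha - \beta)(p!)^{1/p} > D(\alpha,\beta)(\alpha+\beta)^{(p+1)/p}$; matching this against the stated $D(\alpha,\beta) = (\alpha^2-\beta^2)^{(p-1)/2p}\big(\tfrac{2p}{p-1}\big)\big(\tfrac{p-1}{p+1}\big)^{(p+1/2p)}$ is the delicate bookkeeping step, and I would allocate most of the care there, double-checking the exponents of $(\alpha^2-\beta^2)$ and the $p$-dependent factors against Corollary~1 of \cite{nesterov2021implementable}.
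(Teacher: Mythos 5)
There is a genuine gap: your plan discards exactly the piece of information the lemma needs, and the final algebraic inequality you would have to verify is actually \emph{false}.

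Concretely, your final lower bound on the left side is
\[
\langle y^k-x^{k+1},g^{k+1}\rangle \;\ge\; \frac{\sigma_k-H_{k+1}}{p!}\|d^k\|^{p+1}\;>\;\frac{(\alpha-\beta)M_k}{p!}\|d^k\|^{p+1},
\]
obtained by Cauchy--Schwarz on $\langle \nabla f(x^{k+1})-\nabla\T_p(x^{k+1};y^k),\,d^k\rangle$. This bound scales exactly as $\|d^k\|^{p+1}$ and carries no trace of $\|g^{k+1}\|$ except through the separate upper bound $\|g^{k+1}\|<\tfrac{(\alpha+\beta)M_k}{p!}\|d^k\|^p$. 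Both sides of \eqref{eq.ratio pass success} then scale as $\|d^k\|^{p+1}$, so nothing cancels, and your proof reduces to the scale-free inequality $(\alpha-\beta)(p!)^{1/p}\ge D(\alpha,\beta)(\alpha+\beta)^{(p+1)/p}$. Writing $\alpha^2-\beta^2=(\alpha-\beta)(\alpha+\beta)$ in $D$, this becomes $(\alpha-\beta)^{(p+1)/(2p)}(p!)^{1/p}\ge \tfrac{2p}{p-1}\bigl(\tfrac{p-1}{p+1}\bigr)^{(p+1)/(2p)}(\alpha+\beta)^{(3p+1)/(2p)}$, which already fails for, e.g., $p=2$, $\alpha=3$, $\beta=(\alpha+1)/2=2$: the left side is $\sqrt2\approx1.41$ while the right side is $\approx 29$. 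It also fails asymptotically as $\alpha\to 1^+$ because the left side vanishes linearly in $\alpha-\beta$ while the right side vanishes only like $(\alpha-\beta)^{(p-1)/(2p)}$.

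What the paper does differently --- and what your approach loses --- is to square the identity $\tfrac{H_{k+1}}{p!}\|d^k\|^p=\bigl\|g^{k+1}+\tfrac{\sigma_k}{p!}\|d^k\|^{p-1}d^k\bigr\|$. Expanding the square produces an \emph{exact} expression for $\langle d^k,g^{k+1}\rangle$ that retains $\|g^{k+1}\|^2$ as a separate term:
\[
\langle y^k-x^{k+1},g^{k+1}\rangle=\frac{p!}{2\sigma_k\|d^k\|^{p-1}}\Bigl(\frac{\sigma_k^2-H_{k+1}^2}{(p!)^2}\|d^k\|^{2p}+\|g^{k+1}\|^2\Bigr).
\]
Lower-bounding $\sigma_k^2-H_{k+1}^2>(\alpha^2-\beta^2)M_k^2$ via the break condition gives a sum of a term proportional to $\|d^k\|^{p+1}$ and a term proportional to $\|g^{k+1}\|^2/\|d^k\|^{p-1}$; because these two powers of $\|d^k\|$ have opposite sign, minimizing $h(t)=\gamma t^{(p+1)/(p-1)}+\xi/t$ over $t>0$ eliminates $\|d^k\|$ altogether and produces exactly $D(\alpha,\beta)\|g^{k+1}\|^{(p+1)/p}/M_k^{1/p}$. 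By Cauchy--Schwarzing the cross term before squaring, you destroy the $\|g^{k+1}\|^2$ contribution and are left with a bound in which $\|d^k\|$ cannot be eliminated --- that is the fundamental reason the calculation cannot be pushed through. The fix is to follow the paper's decomposition: square first, isolate $\langle d^k,g^{k+1}\rangle$, and only then minimize over $\|d^k\|^{p-1}$.
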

\begin{proof}
    Note that $\frac{H_{k+1}}{p!} = 
            \frac{\|g^{k+1} 
            + \frac{\sigma_k}{p!}\|d^k\|^{p-1} d^k\|}
            {\|d^k\|^p}$, and the mechanism guarantees that $H_{k+1} < \beta M_k$ in the end. Squaring both sides of the former equation gives
            \begin{equation*}
                \frac{H_{k+1}^2}{(p!)^2}\|d^k\|^{2p} = \frac{\alpha^2 M_k^2}{(p!)^2}\|d^k\|^{2p} + \frac{2}{p!}\sigma_k\|d^k\|^{p-1} \langle d^k,g^{k+1} \rangle + \|g^{k+1}\|^2.
            \end{equation*}
            Then by arranging terms, we have
            \begin{equation*}
                \begin{aligned}
                     \langle y^k-x^{k+1},g^{k+1} \rangle = \ &\frac{p!}{2\sigma_k\|d^k\|^{p-1}}\left(\frac{\alpha^2 M_k^2 -H_{k+1}^2}{(p!)^2}\|d^k\|^{2p}+\|g^{k+1}\|^2\right)\\
                    > \ &\frac{p!}{2\sigma_k}\left(\frac{(\alpha^2-\beta^2) M_k^2}{(p!)^2}\|d^k\|^{p+1}+\frac{\|g^{k+1}\|^2}{\|d^k\|^{p-1}}\right).
                \end{aligned}
            \end{equation*}
            Introduce a univariate auxiliary function $h(t) \coloneq \gamma t^{\frac{p+1}{p-1}} +\frac{\xi}{t}, \ \gamma, \ \xi, \ t>0$. It is easy to verify that 
            \begin{equation*}
                \min_{t>0} h(t) = \frac{2p}{p-1}\left(\frac{\xi(p-1)}{p+1}\right)^{\frac{p+1}{2p}}\gamma^{\frac{p-1}{2p}}.
            \end{equation*}
            By plugging in $\xi = \frac{p!\|g^{k+1}\|^2}{2\sigma_k}$ and $\gamma = \frac{(\alpha^2-\beta^2)M_k}{2\alpha p!}$, we obtai 
            \begin{equation*}
                 \langle y^k-x^{k+1},g^{k+1} \rangle > (\alpha^2-\beta^2)^{(p-1)/2p}\left(\frac{2p}{p-1}\right)\left(\frac{p-1}{p+1}\right)^{(p+1/2p)}\cdot\frac{\|g^{k+1}\|^{(p+1)/p}}{M_k^{1/p}}.
            \end{equation*}
            The proof is completed.
\end{proof}

\begin{lemma}\label{lem.estimate sequence}
    Suppose the function $f$ is convex and satisfies \eqref{eq:Lipchitz continuous}. Then for each iteration of Algorithm~\ref{alg:acc HA-AR}, it holds that
    \begin{equation}
        \label{eq.estimating sequence relation}
        A_k F(x^k)\leq \phi_k^\star  \leq \phi_k(x)\leq A_k F(x)+M_kC_p(\alpha,\beta)\|x-x^0\|^{p+1}, \ \forall x\in \dom(\psi), \ k \ge 0,
    \end{equation}
    which is a compact form of Equation \eqref{eq.es relation}.
\end{lemma}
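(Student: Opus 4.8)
The plan is to prove \eqref{eq.estimating sequence relation} by induction on $k$, following the estimating-sequence template of \cite{nesterov2021implementable}. The base case $k=0$ is immediate: since $A_0=0$ and $\phi_0(x)=M_0 C_p(\alpha,\beta)\|x-x^0\|^{p+1}$ by \eqref{eq.update phi}, we get $\phi_0^\star=\phi_0(x^0)=0=A_0 F(x^0)$, the rightmost inequality holds with equality, and the middle one is just the definition of $\phi_0^\star$.

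For the inductive step, assume \eqref{eq.estimating sequence relation} holds at index $k$. The \emph{upper bound} is the easy direction: substituting the recursion \eqref{eq.update phi}, the induction hypothesis $\phi_k(x)\le A_k F(x)+M_k C_p(\alpha,\beta)\|x-x^0\|^{p+1}$, the subgradient inequality $F(x^{k+1})+\inner{g^{k+1}}{x-x^{k+1}}\le F(x)$ (valid because $g^{k+1}\in\partial F(x^{k+1})$), and the identities $A_{k+1}=A_k+a_k$, $M_{k+1}\ge M_k$ yields $\phi_{k+1}(x)\le A_{k+1}F(x)+M_{k+1}C_p(\alpha,\beta)\|x-x^0\|^{p+1}$; the middle inequality $\phi_{k+1}^\star\le\phi_{k+1}(x)$ then holds trivially.

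The \emph{lower bound} $A_{k+1}F(x^{k+1})\le\phi_{k+1}^\star$ is the crux. I would begin from the uniform-convexity estimate \eqref{eq.phi_k phi_low} together with the induction hypothesis $\phi_k^\star\ge A_k F(x^k)$, discard the nonnegative term $C_p(\alpha,\beta)(M_{k+1}-M_k)\|x-x^0\|^{p+1}$ in \eqref{eq.update phi}, and apply convexity of $F$ twice: once as $A_k F(x^k)\ge A_k\big(F(x^{k+1})+\inner{g^{k+1}}{x^k-x^{k+1}}\big)$ and once for the $a_k$-linearization at $x$. The key regrouping uses the interpolation identity $A_{k+1}y^k=A_k x^k+a_k v^k$ to write $A_k(x^k-x^{k+1})+a_k(x-x^{k+1})=A_{k+1}(y^k-x^{k+1})+a_k(x-v^k)$, which gives
\[
\phi_{k+1}(x)\ge A_{k+1}F(x^{k+1})+A_{k+1}\inner{g^{k+1}}{y^k-x^{k+1}}+a_k\inner{g^{k+1}}{x-v^k}+\frac{M_k C_p(\alpha,\beta)}{2^{p-1}}\|x-v^k\|^{p+1}.
\]
Minimizing the last two terms over $x\in\reals^n$ is a one-dimensional computation along $-g^{k+1}$, producing the deficit $-\frac{p}{p+1}\big(\tfrac{2^{p-1}}{(p+1)C_p(\alpha,\beta)}\big)^{1/p}a_k^{(p+1)/p}\|g^{k+1}\|^{(p+1)/p}M_k^{-1/p}$, so $\phi_{k+1}^\star\ge A_{k+1}F(x^{k+1})$ follows once $A_{k+1}\inner{g^{k+1}}{y^k-x^{k+1}}$ is shown to dominate this deficit. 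For that I would invoke Lemma~\ref{lem.ratio pass} to lower-bound $\inner{g^{k+1}}{y^k-x^{k+1}}$ by $D(\alpha,\beta)\|g^{k+1}\|^{(p+1)/p}M_k^{-1/p}$ and Lemma~\ref{lem.magnitude a and A} to bound $a_k^{(p+1)/p}/A_{k+1}\le(p+1)^{(p+1)/p}$, reducing the whole claim to the parameter-free inequality $D(\alpha,\beta)\ge\frac{p}{p+1}(p+1)^{(p+1)/p}\big(\tfrac{2^{p-1}}{(p+1)C_p(\alpha,\beta)}\big)^{1/p}$.

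The main obstacle is this last verification. After substituting the closed form \eqref{eq.Cp} of $C_p(\alpha,\beta)$ and the definition of $D(\alpha,\beta)$ from Lemma~\ref{lem.ratio pass}, the factors $(\alpha^2-\beta^2)$ cancel and, after some exponent arithmetic, the inequality collapses to $2(p^2-1)\ge 2^{2-1/p}$, which holds for all $p\ge 2$ — indeed the constant $C_p(\alpha,\beta)$ in \eqref{eq.Cp} is engineered precisely so that this works out. The careful bookkeeping in the regrouping step and the exponent manipulations in this reduction are the only delicate points; the rest is a routine chaining of convexity with the already-established Lemmas~\ref{lem.magnitude a and A}, \ref{lem.property of phi}, and \ref{lem.ratio pass}.
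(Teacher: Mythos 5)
Your proof is correct and follows essentially the same route as the paper: induction with the upper bound proved via the recursion, convexity of $F$, and $M_{k+1}\ge M_k$; the lower bound via \eqref{eq.phi_k phi_low}, the regrouping identity $A_{k+1}y^k=A_kx^k+a_kv^k$, a one-dimensional minimization over $x-v^k$ in the direction of $-g^{k+1}$, and then Lemma~\ref{lem.magnitude a and A} combined with Lemma~\ref{lem.ratio pass}. One additional merit of your write-up is that you carry the final parameter verification all the way through: after substituting \eqref{eq.Cp} and the definition of $D(\alpha,\beta)$, the $(\alpha^2-\beta^2)$ factors cancel and the condition reduces to $p^2-1\ge 2^{(p-1)/p}$, which holds for all $p\ge 2$; the paper instead just stops at the displayed bound using $D(\alpha,\beta)$, and in fact the intermediate deficit written there (with the factor $2^{(p-1)(p+1)/p}$) appears to contain a typographical exponent — your derivation, which gives $2^{(p-1)/p}$, is the one that actually makes the final inequality hold.
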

\begin{proof}
    We prove by induction, note that $A_0 = 0$, so Equation \eqref{eq.estimating sequence relation} holds for the trivial case $i=0$. Suppose it holds for the case $i=k$. We first check Equation \eqref{eq.es relation 2} for the case $i=k+1$. Note that
    \begin{align*}
        \phi_{k+1}(x)  = \ 
        &\phi_k(x)+a_k\left(F(x^{k+1})+\langle g^{k+1},x-x^{k+1}\rangle \right) +C_p(\alpha,\beta)(M_{k+1}-M_k)\|x-x^0\|^{p+1}             \\
                       \leq \ & A_k F(x)+M_k C_p(\alpha,\beta)\|x-x^0\|^{p+1} +a_k\left(F(x^{k+1})+\langle g^{k+1},x-x^{k+1}\rangle \right)\\
                       &+C_p(\alpha,\beta)(M_{k+1}-M_k)\|x-x^0\|^{p+1}\\
                       \leq \ & A_k F(x) +a_k F(x) + C_p(\alpha,\beta)M_{k+1}\|x-x^0\|^{p+1}                                              \\
                       = \ &A_{k+1} F(x)+C_p(\alpha,\beta)M_{k+1}\|x-x^0\|^{p+1},
    \end{align*}
    where the first inequality is from the induction hypothesis. We now check Equation \eqref{eq.es relation 1} for the case $i=k+1$. First by the uniform convexity of $\phi_k$ and the induction hypothesis, it follows
    \begin{align*}
       \phi_{k+1}^\star
         = \ &\min_{x\in\dom(\psi)} \left\{ \phi_k(x) + a_k \left(F(x^{k+1}) + \langle g^{k+1}, x - x^{k+1} \rangle \right) +C_p(\alpha,\beta)(M_{k+1}-M_k)\|x-x^0\|^{p+1}\right\} \\
         \geq \ &\min_{x\in\dom(\psi)} \left\{ \phi_k(x) + a_k \left(F(x^{k+1}) + \langle g^{k+1}, x - x^{k+1} \rangle \right) \right\} \\
         \geq \ &\min_{x\in\dom(\psi)} \Bigl\{ \phi_k^\star + \frac{C_p(\alpha,\beta)M_{k}}{2^{p-1}} \|x - v^k\|^{p+1}
        + a_k \left(F(x^{k+1}) + \langle g^{k+1}, x - x^{k+1} \rangle \right) \Bigr\}                                                 \\
         \geq \ &\min_{x\in\dom(\psi)} \Bigl\{ A_k F(x^k) + \frac{C_p(\alpha,\beta)M_{k}}{2^{p-1}} \|x - v^k\|^{p+1}
        + a_k \left(F(x^{k+1}) + \langle g^{k+1}, x - x^{k+1} \rangle \right) \Bigr\}. 
    \end{align*}
    Since $F$ is convex, we immediately obtain
    \begin{align*}
        &\phi_{k+1}^\star \\
        \ge \ &\min_{x\in\dom(\psi)} \Bigl\{ A_k F(x^{k+1}) + A_k \langle g^{k+1}, x^k - x^{k+1} \rangle + \frac{C_p(\alpha,\beta)M_{k}}{2^{p-1}} \|x - v^k\|^{p+1}+ a_k \Big(F(x^{k+1}) + \langle g^{k+1}, x - x^{k+1} \rangle \Big) \Bigr\}  
    \end{align*}
    from the above equation. After some algebraic calculation, it holds that
    \begin{align*}
       \phi_{k+1}^\star \ge \ &A_{k+1} F(x^{k+1})
        + A_{k+1} \left\langle g^{k+1},
        \frac{A_k}{A_{k+1}} x^k + \frac{a_k}{A_{k+1}} v^k - x^{k+1} \right\rangle                                                               \\
         &+ \min_{x\in\dom(\psi)} \Bigl\{\frac{C_p(\alpha,\beta)M_{k}}{2^{p-1}} \|x - v^k\|^{p+1}
        + a_k \langle g^{k+1}, x - v^k \rangle \Bigr\}                                                                                \\
        = \ &A_{k+1} F(x^{k+1})
        + A_{k+1} \left\langle g^{k+1},
        \frac{A_k}{A_{k+1}} x^k + \frac{a_k}{A_{k+1}} v^k - x^{k+1} \right\rangle                                                               \\
         &- a_k^{(p+1)/p}\cdot\frac{p2^{(p-1)(p+1)/p}}{C_p^{1/p}(\alpha,\beta)(p+1)^{(p+1/p)}}\cdot\frac{ \|g^{k+1}\|^{(p+1)/p}}{M_k^{1/p}}                                                    \\
         = \ &A_{k+1} F(x^{k+1}) + A_{k+1} \langle g^{k+1}, y^k - x^{k+1} \rangle
        - a_k^{(p+1)/p}\cdot\frac{p2^{(p-1)(p+1)/p}}{C_p^{1/p}(\alpha,\beta)(p+1)^{(p+1/p)}}\cdot\frac{ \|g^{k+1}\|^{(p+1)/p}}{M_k^{1/p}}                                                                \\
        = \ &A_{k+1} F(x^{k+1}) + A_{k+1} \left(
        \langle g^{k+1}, y^k - x^{k+1} \rangle
        - A_{k+1}^{-1}a_k^{(p+1)/p} \cdot \frac{p2^{(p-1)(p+1)/p}}{C_p^{1/p}(\alpha,\beta)(p+1)^{(p+1/p)}}\cdot\frac{ \|g^{k+1}\|^{(p+1)/p}}{M_k^{1/p}} \right).
    \end{align*}
    Finally, by using Equations \eqref{eq.magnitude a and A} and \eqref{eq.ratio pass success}, we concludde
    \begin{align*}
        \phi_{k+1}^\star
         \ge \ &A_{k+1} F(x^{k+1}) + A_{k+1} \left(
        \langle g^{k+1}, y^k - x^{k+1} \rangle
        - D(\alpha,\beta)\cdot \frac{\|g^{k+1}\|^{(p+1)/p}}{M_k^{1/p}}  \right)\\
        \geq \ &A_{k+1}F(x^{k+1}).
    \end{align*}
    The proof is completed.
\end{proof}

As a direct consequence of combining Equation \eqref{eq.update a and A}, Lemma~\ref{lem.upper bound L}, and Lemma~\ref{lem.estimate sequence}, we have the following iteration complexity result of Algorithm~\ref{alg:acc HA-AR}. Note that our result is established in terms of the function value gap. By incorporating \texttt{HAR-A} into the recently proposed accumulative regularization framework \citep{ji2025high}, the convergence guarantee can be translated to a guarantee on the gradient norm, which remains $\cO(1/k^{p+1})$ and is still adaptive.
\begin{theorem}
    \label{thm.convergence outer loop}
    Suppose the function $f$ is convex and satisfies \eqref{eq:Lipchitz continuous}. Then the sequence $\{x^k\}_{k \ge 0}$ generated by Algorithm~\ref{alg:acc HA-AR} satisfies
    \begin{equation*}
        F(x^k)-F(x^\star) \leq \frac{H_{\max}C_p(\alpha,\beta)\|x^0-x^\star\|^{p+1}}{k^{p+1}}, \ \forall k\geq 0.
    \end{equation*}
\end{theorem}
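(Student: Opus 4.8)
The plan is to read the bound off directly from the estimating-sequence relation of Lemma~\ref{lem.estimate sequence}. First I would fix an iteration index $k \ge 1$ and instantiate the chain of inequalities \eqref{eq.estimating sequence relation} at the specific point $x = x^\star$, giving
\begin{equation*}
    A_k F(x^k) \le \phi_k^\star \le \phi_k(x^\star) \le A_k F(x^\star) + M_k C_p(\alpha,\beta)\|x^\star - x^0\|^{p+1}.
\end{equation*}
Since $A_k = k^{p+1} > 0$ for $k \ge 1$ by \eqref{eq.update a and A}, I can divide through by $A_k$ and rearrange to obtain
\begin{equation*}
    F(x^k) - F(x^\star) \le \frac{M_k C_p(\alpha,\beta)\|x^0 - x^\star\|^{p+1}}{A_k} = \frac{M_k C_p(\alpha,\beta)\|x^0 - x^\star\|^{p+1}}{k^{p+1}}.
\end{equation*}

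Next I would invoke Lemma~\ref{lem.upper bound L}, which bounds the total number of repetitions at each iteration and hence guarantees $M_k \le H_{\max}$ for every $k$, with $H_{\max} = \max\{H_0, L_p\}$. Substituting this uniform bound for $M_k$ in the numerator yields the asserted inequality $F(x^k) - F(x^\star) \le H_{\max} C_p(\alpha,\beta)\|x^0 - x^\star\|^{p+1}/k^{p+1}$. The boundary case $k = 0$ is handled separately: because $A_0 = 0$, the right-hand side of the claim is $+\infty$, so the statement holds vacuously there (equivalently, one may simply state the estimate for $k \ge 1$).

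I do not expect any genuine obstacle in this argument: all of the analytic work — propagating the two estimating-sequence relations \eqref{eq.es relation 1}--\eqref{eq.es relation 2} through the inner \textbf{Repeat} loop via convexity of $F$ and uniform convexity of $\phi_k$, together with the ratio inequality \eqref{eq.ratio pass success} and the magnitude bound \eqref{eq.magnitude a and A} — has already been carried out in Lemmas~\ref{lem.property of phi}, \ref{lem.upper bound L}, and \ref{lem.estimate sequence}. The theorem is therefore a one-line corollary obtained by plugging $x = x^\star$ into \eqref{eq.estimating sequence relation} and combining it with $A_k = k^{p+1}$ and $M_k \le H_{\max}$; the only point requiring a moment's care is the interpretation of the $k = 0$ endpoint.
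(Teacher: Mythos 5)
Your derivation is correct and coincides with the paper's own argument: the paper states the theorem as a direct consequence of Lemma~\ref{lem.estimate sequence} (evaluated at $x = x^\star$), the choice $A_k = k^{p+1}$ from \eqref{eq.update a and A}, and the uniform bound $M_k \le H_{\max}$ from Lemma~\ref{lem.upper bound L}. Your extra remark about the vacuous $k=0$ endpoint is harmless and does not alter the argument.
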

\begin{remark}
    The above iteration complexity $\cO(1/k^{p+1})$ is not optimal when $p \ge 2$ compared with the lower bound established in \cite{garg2021near}. However, to achieve the optimal iteration complexity, one generally requires some highly nontrivial line-search procedure (see \cite{monteiro2013accelerated,kovalev2022first,carmon2022optimal}), which is hard to implement. As a result, we leave the development of an easily implementable, adaptive optimal pth-order tensor method for future work.
\end{remark}

\begin{remark}\label{remark.practical} 
It is known that accelerated methods often underperform in practice due to the lack of local convergence. Their stronger worst-case complexity guarantees often come at the expense of local efficiency.
Empirical evidence indicates that in many problems, such as logistic regression, accelerated methods cannot match unaccelerated counterparts when searching for solutions in the high-precision regimes; see ~\cite{jiang2020unified,huang2024inexact,chen2022accelerating,carmon2022optimal}. Consequently, it was often necessary to manually switch to Newton’s method to achieve high-accuracy solutions.
\end{remark}

\section{Numerical Experiments}\label{sec.experiments}

We implement two practical history-aware methods using second-order derivatives, i.e., $p=2$,  and compare them to the adaptive cubic regularized Newton method.  In regard of Remark~\ref{remark.practical}, we do not provide an implementation of Algorithm~\ref{alg:acc HA-AR}.
\begin{itemize}[leftmargin=*]
    \item \harc{} (Algorithm~\ref{alg:cyclic ac pth-order method}) and \hars{} (Algorithm~\ref{alg:slide ac pth-order method}), each tested with different budget sizes $\mathcal B$ (cf. \eqref{eq:requirement of budget}).
    The resulting cubic subproblems are solved using the textbook one-dimensional line-search strategy as described in \cite{nesterov2006cubic}, endowed with Cholesky factorization to solve the arising linear systems.
    Because these subproblems are solved through direct factorizations, we do not employ inexact termination techniques commonly used in iterative Krylov solvers; see, e.g., \cite{curtis2021trust,zhang2025homogeneous}.
    \item \arc{}: The adaptive cubic regularized Newton method, originally proposed by \citet{nesterov2006cubic} and \citet{cartis2011adaptive1}.
    A scalable version is available in \cite{dussaultScalableAdaptiveCubic2024}, implemented as the open-source package \texttt{AdaptiveRegularization.jl}.
We adopt its default parameter settings, including its built-in subproblem solver.
\end{itemize}
All methods use exact Hessian evaluations to ensure fair and consistent comparison. 

\subsection{Convex optimization}
We conduct experiments on the regularized logistic regression problem, which is defined as follows.
\begin{equation}
    \label{eq.logistic regression}
    f(x) = \frac{1}{N} \sum_{i=1}^N \log \left(1 + \exp \left(-b_i a_i^\top  x\right)\right) + \frac{\gamma}{2}\|x\|^2,
\end{equation}
where $a_i \in \mathbb{R}^n$ and $b_i \in \{-1, 1\}$, $\gamma = 10^{-5}$ is the regularization parameter. We report the number of Hessian evaluations required by each method on several LIBSVM datasets.\footnote{For details, see \url{https://www.csie.ntu.edu.tw/cjlin/libsvmtools/datasets/}.}

From the results, \harc{}, \hars{}, and \arc{} generally outperform the other methods across different instances, and overall, their performances are comparable.
Interestingly, the budget size $\mathcal {B} $ plays a crucial role in practical performance. In general, a larger $\mathcal B$ yields more stable results, whereas, as observed in all six instances, a smaller $\mathcal B$ appears to favor the sliding variant (\hars{}) significantly.

\begin{figure}[ht]
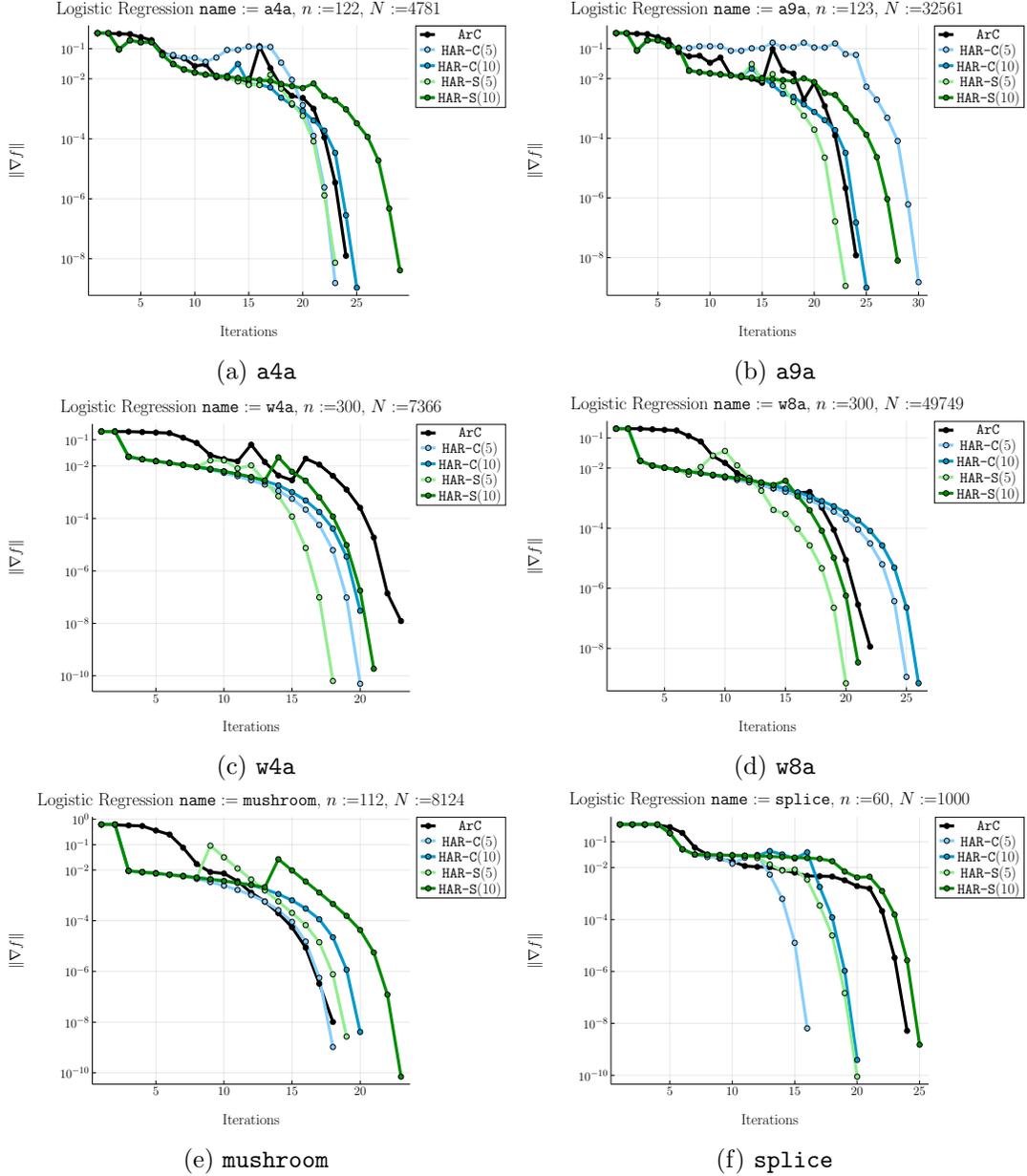

    \small
    \centering
    \subcaptionbox{\texttt{a4a}}[0.43\columnwidth]{
        \resizebox{0.43\columnwidth}{!}{\input{figs/e-logistic-a4a-k}}
    }
    \subcaptionbox{\texttt{a9a}}[0.43\columnwidth]{
        \resizebox{0.43\columnwidth}{!}{\input{figs/e-logistic-a9a-k}}
    }
    \subcaptionbox{\texttt{w4a}}[0.43\columnwidth]{
        \resizebox{0.43\columnwidth}{!}{\input{figs/e-logistic-w4a-k}}
    }
    \subcaptionbox{\texttt{w8a}}[0.43\columnwidth]{
        \resizebox{0.43\columnwidth}{!}{\input{figs/e-logistic-w8a-k}}
    }
    \subcaptionbox{\texttt{mushroom}}[0.43\columnwidth]{
        \resizebox{0.43\columnwidth}{!}{\input{figs/e-logistic-mushroom-k}}
    }
     \subcaptionbox{\texttt{splice}}[0.43\columnwidth]{
        \resizebox{0.43\columnwidth}{!}{\input{figs/e-logistic-splice-k}}
    }

    \caption{Logistic regression using the LIBSVM datasets. The number in the bracket means the budget size $\mathcal B$.}\label{fig.logistic regression}
    \normalsize
\end{figure}

\subsection{CUTEst dataset}

We evaluate our methods against \arc{} on the CUTEst dataset; see \cite{gouldCutestConstrainedUnconstrained2015}. 
For simplicity, we restrict attention to problems with dimension $n \le 200$, yielding a total of 95 instances. 

Table~\ref{tab.perf.geocutest} summarizes the tested algorithms. As before, the number in the bracket means the budget size used in the method. Based on the knowledge we gain from convex optimization, we set the budget size to $\mathcal B = 15$ to \harc{}, while $\mathcal B = 5$ to \hars{}.
Here, $K$ denotes the number of successful instances. We report performance statistics using scaled geometric means (SGMs): $\overline t_G$, $\overline k_G^f$, $\overline k_G^g$, and $\overline k_G^H$ represent the (geometric) mean running time, and the number of function, gradient, and Hessian evaluations, respectively.

\begin{table}[h]
\centering
\caption{Performance of different algorithms on the CUTEst dataset.
$\overline t_G$ is scaled by 1 second, and $\overline k_G^f$, $\overline k_G^g$, and $\overline k_G^H$ are scaled by 50 iterations.
For failed instances, the iteration count and solving time are set to $20{,}000$.}
\label{tab.perf.geocutest}
\begin{tabular}{lrrrrr}
\toprule
Method & $K$ & $\overline t_G$ & $\overline k_G^f$ & $\overline k_G^g$ & $\overline k_G^H$ \\
\midrule
\arc{}  & 92 & 0.26 & 47.86 & 46.11 & 44.85 \\
\harc{} (15) & 91 & 0.59 & 57.34 & 58.73 & 55.95 \\
\hars{} (5) & 90 & 0.46 & 48.38 & 49.70 & 47.05 \\
\bottomrule
\end{tabular}
\end{table}

From these results, we observe that \harc{} and \hars{} are broadly comparable to the state-of-the-art \arc{} implementation. Although they solve one and two fewer instances, respectively, their geometric mean statistics are quite similar, aside from the higher running time. This slowdown primarily stems from our current cubic subproblem solver and remains an avenue for optimization.

\begin{figure}[t]
\small
\centering
\subcaptionbox{Gradient evaluations}[0.40\columnwidth]{
\includegraphics[width=0.40\columnwidth]{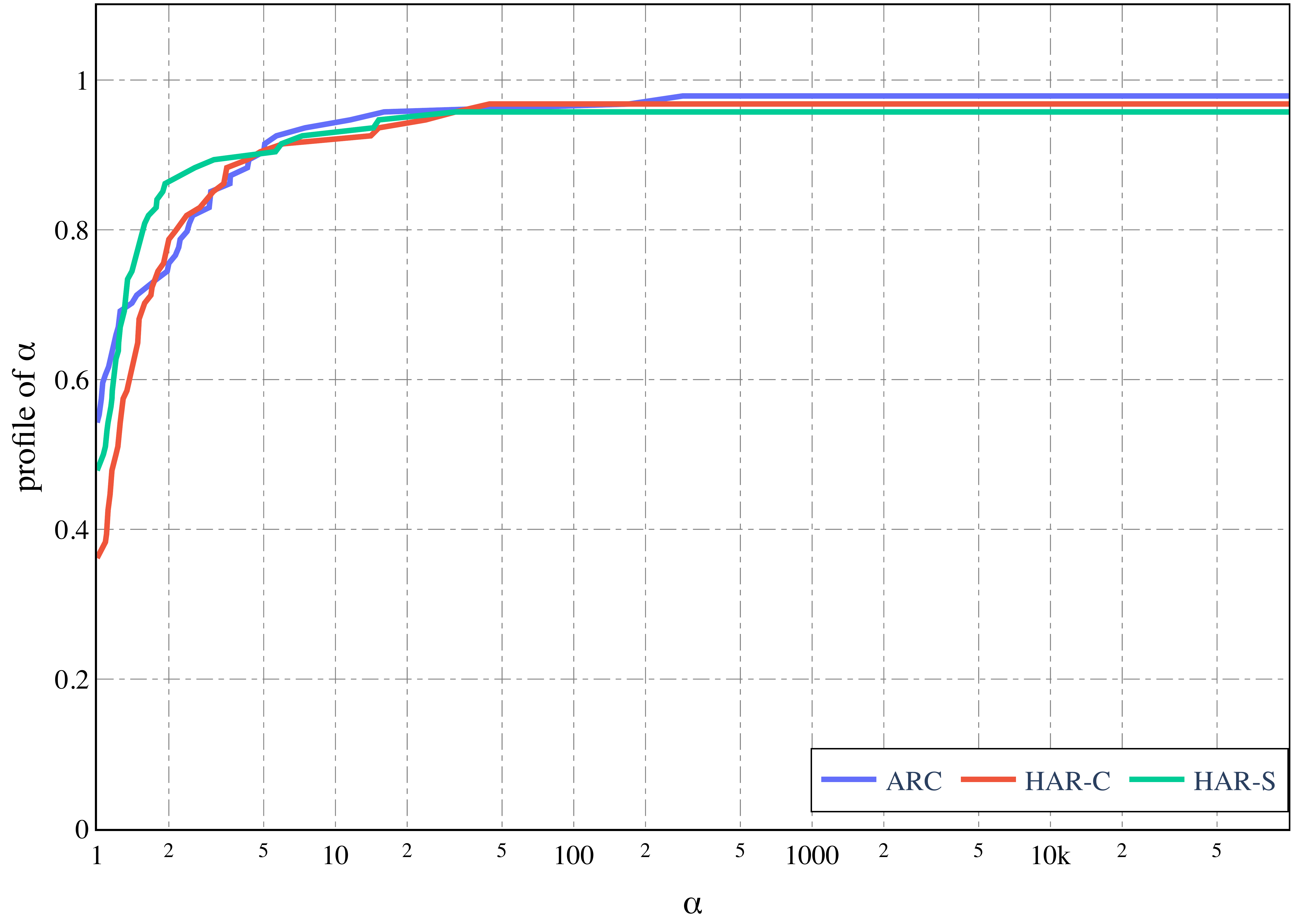}
}
\subcaptionbox{Hessian evaluations}[0.40\columnwidth]{
\includegraphics[width=0.40\columnwidth]{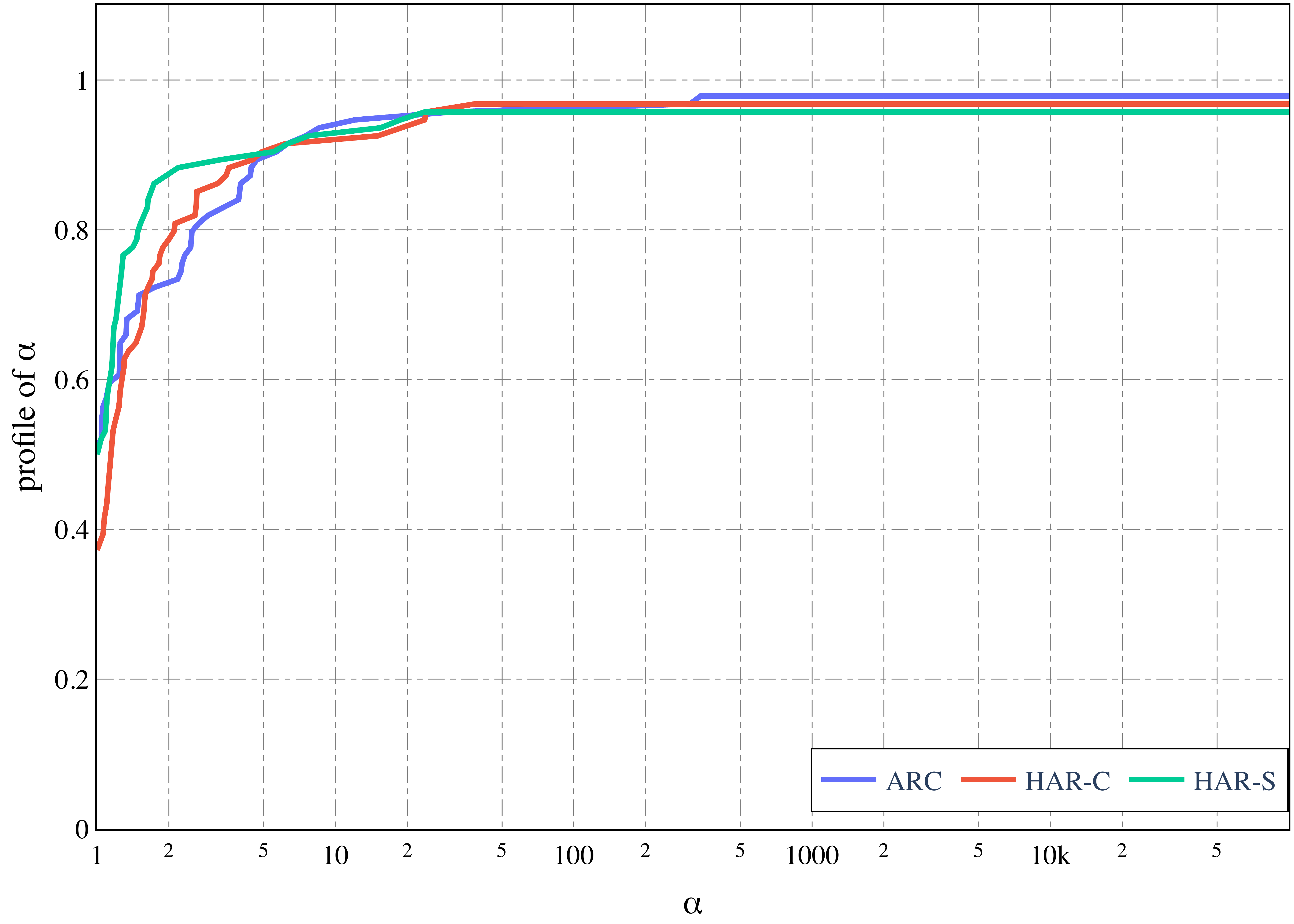}
}
\caption{Performance profiles of selected CUTEst instances. Panel (a) shows gradient evaluations; panel (b) shows Hessian evaluations.}\label{fig.perfprofile}
\normalsize
\end{figure}

Beyond SGM-based metrics, we also employ performance profiles on gradient and Hessian evaluations \citep{dolanOptimalityMeasuresPerformance2006}; see Figure~\ref{fig.perfprofile}.
At point $\alpha$, the performance profile represents the fraction of problems solved within $2^{\alpha}$ times the best iteration count among all methods.
These preliminary results suggest that both \harc{} and \hars{}, especially the latter, are highly competitive.
They motivate further exploration of practical improvements to \har{}, such as adopting a dynamic budget size $\mathcal B$.

\section{Conclusion}
In this work, we develop a new adaptive regularization strategy called the history-aware adaptive regularization method, which uses a history of local Lipschitz estimates to select the current regularization parameter. Our approach works for both convex and nonconvex objectives, with the same complexity guarantees as the standard $p$th-order tensor method that requires a known Lipschitz constant. Additionally, an Nesterov's accelerated version is proposed for convex optimization. For practical considerations, we introduce two variants that use only a portion of the past Lipschitz estimates. Given a rough upper bound on the Lipschitz constant, these two variants achieve the same iteration complexity guarantees in terms of input accuracy as the original one. In numerical experiments, these two variants demonstrate encouraging numerical performance, which validates the effectiveness of our overall approach. Future work includes investigating whether this strategy works for H\"{o}lder smoothness, and how to design an optimal version that matches the lower bound in the convex setting.

\clearpage
\bibliographystyle{abbrvnat}
\bibliography{ref}

\end{document}